\newtheorem{theorem}{Theorem}[section]
\newtheorem{lemma}[theorem]{Lemma}
\newtheorem{proposition}[theorem]{Proposition}
\newtheorem{corollary}[theorem]{Corollary}
\theoremstyle{definition}
\newtheorem{definition}[theorem]{Definition}
\newtheorem{example}[theorem]{Example}
\newtheorem{remark}[theorem]{Remark}
\definecolor{cKlaus}{rgb}{0.1,0.55,0.03}
\definecolor{cKlausOK}{rgb}{0.6,0.10,0.33}
\definecolor{intOrange}{rgb}{1.0,.310,.0}
\newcommand{\ktrash}[1]{}
\definecolor{cMartin}{rgb}{0.2,0.65,0.33}
\newcommand{\kbox}[1]{#1}
\newcommand{\cN}{\mathcal{N}}
\DeclareMathOperator{\rank}{rk}
\newcommand{\PP}{\mathbb P}
\newcommand{\N}{\mathbb N}
\newcommand{\R}{\mathbb R}
\newcommand{\T}{\mathbb T}
\newcommand{\V}{\mathbb V}
\newcommand{\Z}{\mathbb Z}
\newcommand{\CD}{{\mathcal D}}
\newcommand{\CL}{{\mathcal L}}
\newcommand{\CO}{{\mathcal O}}
\newcommand{\toric}{\T\V}  
\newcommand{\gH}{\operatorname{H}}
\newcommand{\Pic}{\operatorname{Pic}}
\newcommand{\Cl}{\operatorname{Cl}}
\definecolor{intOrange}{rgb}{1.0,.310,.0}
\newcommand{\gHom}{\mbox{\rm Hom}}
\newcommand{\disjcup}{\sqcup}
\newcommand{\kst}{\,|\;}
\newcommand{\surj}{\rightarrow\hspace{-0.8em}\rightarrow}
\newcommand{\ko}{\overline}
\newcommand{\normal}{\cN}
\newcommand{\vect}{\overrightarrow}
\newcommand{\Imm}{\operatorname{Imm}}
\newcommand{\cl}{{s}}
\newcommand{\mes}{{maximal exceptional sequence}}
\newcommand{\fes}{{full exceptional sequence}}
\newcommand{\cont}{{\operatorname{cont}}}
\newcommand{\width}{{w}}
\newif\ifcomment
\newcommand{\scc}{\sc} 
\definecolor{originM}{RGB}{180,0,0}
\definecolor{origin}{RGB}{0,130,0}
\definecolor{regi}{HTML}{677081}
\definecolor{regii}{RGB}{0,0,160}
\definecolor{regiii}{RGB}{0,0,160}
\definecolor{redSeg}{RGB}{160,0,0}
\definecolor{center}{RGB}{180,120,60}
\definecolor{parallelogram}{RGB}{135,206,250}
\definecolor{coverA}{RGB}{180,0,0}
\definecolor{coverB}{RGB}{10,180,0}
\definecolor{coverC}{RGB}{10,0,180}
\definecolor{coverD}{RGB}{80,20,60}
\definecolor{coverE}{RGB}{20,60,10}
\definecolor{coverF}{RGB}{120,50,170}
\definecolor{coverG}{RGB}{20,50,240}
\definecolor{coverH}{rgb}{1.0,0.25,0.0}
\definecolor{skin}{HTML}{FFECC9}
\definecolor{pumpkin}{HTML}{FEDFA9}
\definecolor{piggy}{HTML}{FFB99D}
\definecolor{fiolet}{HTML}{CD8F9C}
\definecolor{granat}{HTML}{677081}
\definecolor{ciemnyblekit}{HTML}{91A1B8}
\definecolor{oliwkowy}{HTML}{627037}
\definecolor{ciemnazielen}{HTML}{394D2E}
\definecolor{ciemnyfiolet}{HTML}{424444}
\definecolor{mocnyfiolet}{HTML}{717299}
\definecolor{jasnyfiolet}{HTML}{B0ABCC}
\definecolor{bladyfiolet}{HTML}{C9C7DB}
\definecolor{lightblue}{RGB}{135,206,250}
\definecolor{darkblue}{RGB}{0,0,160}
\definecolor{darkgreen}{RGB}{0,160,0}
\begin{document}

\title
{
Exceptional sequences of 8 line bundles
on $(\PP^1)^3$
}

\author[K.~Altmann]{Klaus Altmann
}
\address{Institut f\"ur Mathematik,
FU Berlin,
K\"onigin-Luise-Str.~24-26,
D-14195 Berlin,
Germany}
\email{altmann@math.fu-berlin.de}

\author[M.~Altmann]{Martin Altmann
}
\address{Mathematikwettbewerb K\"anguru e.V.,
c/o HU Berlin,
Institut f\"ur Mathematik,
Rudower Chaussee 25,
D-12489 Berlin,
Germany}
\email{maltmann@math.hu-berlin.de}

\begin{abstract}
We investigate \mes s of line bundles
on $(\PP^1)^r$, i.e.\ those consisting of
$2^r$ elements. For $r=3$ we show that they are always full, meaning
that they generate the derived category.
\\
Everything is done in the discrete setup: Exceptional sequences of
line bundles appear as special finite subsets $\cl$ of the {\scc Picard}
group $\Z^r$ of $(\PP^1)^r$, and the question of generation is understood
like a process of contamination of the whole $\Z^r$ out of
an infectious seed $\cl$.
\end{abstract}

\maketitle
\setcounter{tocdepth}{1}

\vspace*{-2ex}

\section{Introduction} \label{intro}

The content of the paper can be understood in two different languages.
While the version of Subsection~(\ref{combLang}) presents everything as a
challenging self-contained, combinatorial task, like a game to play,
the background motivation stems
from the algebro geometric scenario explained in Subsection~(\ref{algeomLang}).
The paper sticks to the first language. That is,
beyond Subsection~(\ref{basicDefP}), no algebraic geometry will appear.

\subsection{The combinatorial language}
\label{combLang}
Let $\cl=\{\cl^0,\ldots,\cl^{m-1}\}\subset\Z^3$
be an ordered subset such that for each $i<j$ there is
an index $\nu=\nu(i,j)\in\{1,2,3\}$ satisfying
$\cl^j_\nu-\cl^i_\nu=1$.
Then, beginning with $\cl$, we start a contamination procedure by declaring
each affine line $\ell\subset\Z^3$ parallel to a coordinate axis
to be infected if it contains at least two adjacent infected points.
See the Definitions \ref{def-fesProdPd} and \ref{def-contamination} for more
details.
Then, Theorem~\ref{th-bbbCont} states that $\cl$ consists of at most
eight elements and, moreover, if $|\cl|$ equals
this maximal number, then it contaminates the whole lattice $\Z^3$.

\begin{figure}[ht]
$
\begin{tikzpicture}[scale=0.4]
\begin{scope}[shift={(0,0)}]
\multido{\n=0+2}{6}{
\multido{\na=1+1}{5}{\draw[thin,color=black!40] (\na,\n) -- ++(5/3,5/3) (1/3*\na,1/3*\na+\n) -- ++(5,0);}
\draw[color=black] (0,\n) -- ++(5,0) -- ++(5/3,5/3) -- ++(-5,0) -- cycle;
}
\fill[color=coverH] (0+0/3,0/3+2*0) circle (3pt);
\fill[color=coverH] (0+0/3,0/3+2*1) circle (3pt);
\fill[color=coverH] (0+1/3,1/3+2*2) circle (3pt);
\fill[color=coverH] (4+1/3,1/3+2*3) circle (3pt);
\fill[color=coverH] (5+1/3,1/3+2*3) circle (3pt);
\fill[color=coverH] (1+4/3,4/3+2*4) circle (3pt);
\fill[color=coverH] (1+5/3,5/3+2*4) circle (3pt);
\fill[color=coverH] (1+2/3,2/3+2*5) circle (3pt);
\draw[thick,color=coverB] (0+0/3+1/4,2/3+2*0-0.1) node{$0$};
\draw[thick,color=coverB] (0+0/3+1/4,2/3+2*1-0.1) node{$1$};
\draw[thick,color=coverB] (0+1/3+1/4,3/3+2*2-0.1) node{$2$};
\draw[thick,color=coverB] (4+1/3+1/4,3/3+2*3-0.1) node{$3$};
\draw[thick,color=coverB] (5+1/3+1/4,3/3+2*3-0.1) node{$4$};
\draw[thick,color=coverB] (1+4/3-2/4,4/3+2*4-0.1) node{$5$};
\draw[thick,color=coverB] (1+5/3+1/4,7/3+2*4-0.1) node{$6$};
\draw[thick,color=coverB] (1+2/3+1/4,4/3+2*5-0.1) node{$7$};
\end{scope}

\begin{scope}[shift={(7.5,0)}]
\multido{\n=0+2}{6}{
\multido{\na=1+1}{5}{\draw[thin,color=black!40] (\na,\n) -- ++(5/3,5/3) (1/3*\na,1/3*\na+\n) -- ++(5,0);}
\draw[color=black] (0,\n) -- ++(5,0) -- ++(5/3,5/3) -- ++(-5,0) -- cycle;
}
\multido{\n=0+1}{6}{
\fill[color=coverH] ( 0+ 0/3, 0/3+2*\n) circle (3pt);
\fill[color=coverH] ( 1+\n/3,\n/3+2* 4) circle (3pt);
\fill[color=coverH] (\n+ 1/3, 1/3+2* 3) circle (3pt);
}
\fill[color=coverH] ( 0+ 1/3, 1/3+2* 2) circle (3pt);
\fill[color=coverH] ( 1+ 2/3, 2/3+2* 5) circle (3pt);
\fill[color=coverH] (0+0/3,0/3+2*0) circle (3pt);
\fill[color=coverG] (0+0/3,0/3+2*1) circle (3pt);
\fill[color=coverG] (0+1/3,1/3+2*2) circle (3pt);
\fill[color=coverG] (1+2/3,2/3+2*5) circle (3pt);
\fill[color=coverG] (1+4/3,4/3+2*4) circle (3pt);
\fill[color=coverG] (1+5/3,5/3+2*4) circle (3pt);
\fill[color=coverG] (4+1/3,1/3+2*3) circle (3pt);
\fill[color=coverG] (5+1/3,1/3+2*3) circle (3pt);
\end{scope}
\begin{scope}[shift={(15,0)}]
\multido{\n=0+2}{6}{
\multido{\na=1+1}{5}{\draw[thin,color=black!40] (\na,\n) -- ++(5/3,5/3) (1/3*\na,1/3*\na+\n) -- ++(5,0);}
\draw[color=black] (0,\n) -- ++(5,0) -- ++(5/3,5/3) -- ++(-5,0) -- cycle;
\multido{\n=0+1}{6}{
\fill[color=coverH] ( 0+ 0/3, 0/3+2*\n) circle (3pt);
\fill[color=coverH] ( 1+\n/3,\n/3+2* 4) circle (3pt);
\fill[color=coverH] (\n+ 1/3, 1/3+2* 3) circle (3pt);
\fill[color=coverH] ( 0+\n/3,\n/3+2* 2) circle (3pt);
\fill[color=coverH] ( 1+ 2/3, 2/3+2*\n) circle (3pt);
\fill[color=coverH] (\n+ 0/3, 0/3+2* 4) circle (3pt);
\fill[color=coverH] ( 0+\n/3,\n/3+2* 3) circle (3pt);
\fill[color=coverH] ( 0+ 1/3, 1/3+2*\n) circle (3pt);
\fill[color=coverH] ( 1+ 1/3, 1/3+2*\n) circle (3pt);
}
\multido{\n=0+1}{6}{
\fill[color=coverG] ( 0+ 0/3, 0/3+2*\n) circle (3pt);
\fill[color=coverG] ( 1+\n/3,\n/3+2* 4) circle (3pt);
\fill[color=coverG] (\n+ 1/3, 1/3+2* 3) circle (3pt);
}
\fill[color=coverG] ( 0+ 1/3, 1/3+2* 2) circle (3pt);
\fill[color=coverG] ( 1+ 2/3, 2/3+2* 5) circle (3pt);
}
\end{scope}
\begin{scope}[shift={(22.5,0)}]
\multido{\n=0+2}{6}{
\multido{\na=1+1}{5}{\draw[thin,color=black!40] (\na,\n) -- ++(5/3,5/3) (1/3*\na,1/3*\na+\n) -- ++(5,0);}
\draw[color=black] (0,\n) -- ++(5,0) -- ++(5/3,5/3) -- ++(-5,0) -- cycle;
\multido{\na=0+1}{6}{\multido{\nb=0+1}{6}{
\fill[color=coverH] (  0+\na/3,\na/3+2*\nb) circle (3pt);
\fill[color=coverH] (  1+\na/3,\na/3+2*\nb) circle (3pt);
\fill[color=coverH] (\na+  1/3,  1/3+2*\nb) circle (3pt);
}}
\multido{\n=2+1}{4}{
\fill[color=coverH] (\n+ 0/3, 0/3+2* 4) circle (3pt);
\fill[color=coverH] (\n+ 2/3, 2/3+2* 2) circle (3pt);
\fill[color=coverH] (\n+ 2/3, 2/3+2* 3) circle (3pt);
}
\multido{\n=0+1}{6}{
\fill[color=coverG] ( 0+ 0/3, 0/3+2*\n) circle (3pt);
\fill[color=coverG] ( 1+\n/3,\n/3+2* 4) circle (3pt);
\fill[color=coverG] (\n+ 1/3, 1/3+2* 3) circle (3pt);
\fill[color=coverG] ( 0+\n/3,\n/3+2* 2) circle (3pt);
\fill[color=coverG] ( 1+ 2/3, 2/3+2*\n) circle (3pt);
\fill[color=coverG] (\n+ 0/3, 0/3+2* 4) circle (3pt);
\fill[color=coverG] ( 0+\n/3,\n/3+2* 3) circle (3pt);
\fill[color=coverG] ( 0+ 1/3, 1/3+2*\n) circle (3pt);
\fill[color=coverG] ( 1+ 1/3, 1/3+2*\n) circle (3pt);
}
}
\end{scope}
\begin{scope}[shift={(30,0)}]
\multido{\n=0+2}{6}{
\multido{\na=1+1}{5}{\draw[thin,color=black!40] (\na,\n) -- ++(5/3,5/3) (1/3*\na,1/3*\na+\n) -- ++(5,0);}
\draw[color=black] (0,\n) -- ++(5,0) -- ++(5/3,5/3) -- ++(-5,0) -- cycle;
\multido{\na=2+1}{4}{\multido{\nb=0+1}{6}{\multido{\nc=0+1}{6}{
\fill[color=coverH] (\na+\nb/3,\nb/3+2*\nc) circle (3pt);
}}}
\multido{\na=0+1}{6}{\multido{\nb=0+1}{6}{
\fill[color=coverG] (  0+\na/3,\na/3+2*\nb) circle (3pt);
\fill[color=coverG] (  1+\na/3,\na/3+2*\nb) circle (3pt);
\fill[color=coverG] (\na+  1/3,  1/3+2*\nb) circle (3pt);
}}
\multido{\n=2+1}{4}{
\fill[color=coverG] (\n+ 0/3, 0/3+2* 4) circle (3pt);
\fill[color=coverG] (\n+ 2/3, 2/3+2* 2) circle (3pt);
\fill[color=coverG] (\n+ 2/3, 2/3+2* 3) circle (3pt);
}
}
\end{scope}
\end{tikzpicture}
$
\caption{An infection spreading, contaminating $\Z^3$ in four steps}
\label{fig:fourSteps}
\end{figure}
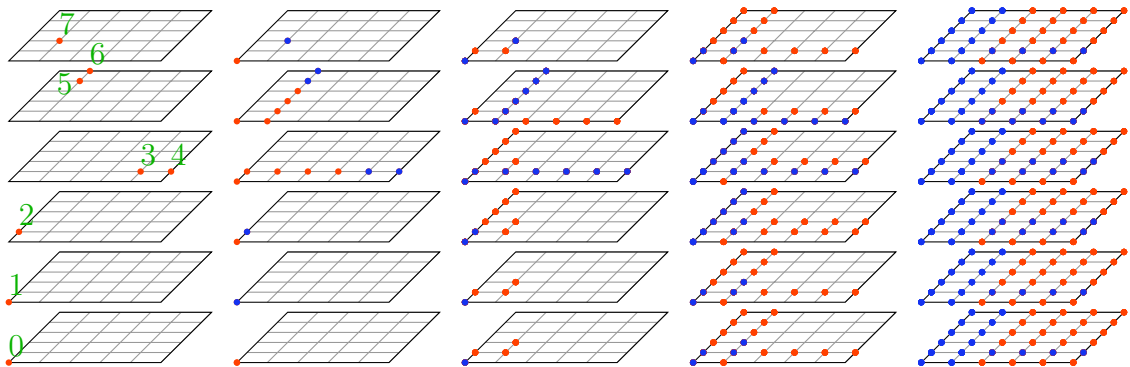

\subsection{The algebro geometric language}
\label{algeomLang}
To investigate the derived category of smooth, projective
algebraic varieties $X$
one tries to mimic the methods of linear algebra by working with
semiorthogonal decompositions and, more special,
with so-called exceptional sequences,
cf.~Definition~\ref{def-fes}. The latter can be
understood as an analog to linearly independent sets.
Moreover, by definition,
\fes s generate the entire derived category $\CD(X)$ so they correspond to
bases of vector spaces in linear algebra.
\\[1ex]
However, this comparison has a flaw: While the cardinality of
\fes s $\cl$, if they exist at all, is known to be the
rank of the {\scc Grothendieck} group $K_0(X)$,
it is not clear whether exceptional sequences $s$
with $|\cl|=\rank K_0(X)$ are automatically full.
In \cite{Phantom}, this problem was related to the existence of
so-called phantom
categories, i.e.\ non-trivial triangulated categories with vanishing
Hochschild homology and trivial {\scc Grothendieck} group. They may
appear as orthogonal
complements of those exceptional sequences. See \cite{summary_Phantoms}
and the citations therein for examples and a general discussion of this
subject.
\\[1ex]
A special situation for those questions appears with
the class of smooth, projective toric varieties
$X=\toric(\Sigma)$ for fans $\Sigma$ in some
real vector space $N_\R=\R^n$. Here, the rank of $K_0(X)$ equals
the number of full-dimensional cones inside $\Sigma$. Alternatively,
if $\Sigma$ appears as the normal fan $\normal(\Delta)$ of a smooth
lattice polytope $\Delta$ in the dual space $M_\R=(\R^n)^*$,
then $\rank K_0(X)$ equals the number of vertices of $\Delta$.
If one drops the requirement of line bundles and asks for general complexes
of coherent sheaves as elements of \fes s
instead, then their existence was guaranteed by {\scc Kawamata}'s papers
\cite{kaw1,kaw2,kaw3}. On the other hand, if one insists on line bundles,
then {\scc Efimov} has shown in \cite{efimov} that full exceptional sequences
cannot exist for all smooth, projective toric varieties.
\\[1ex]
In the present paper we do not address the question for which fans $\Sigma$
those sequences exist at all. Instead, we
consider the very special situation of $X=(\PP^1)^r$ where $\Delta$ is the
$r$-dimensional cube having $2^r$ vertices.
Here, the existence is guaranteed by the trivial
example $\cl=\{\CO(a)\kst a\in\{0,1\}^r\}$. However, it is not trivial at all 
whether all other exceptional sequences consisting of $2^r$ line bundles
generate $\Pic(\PP^1)^r=\Z^r$ which is generating $\CD(\PP^1)^r$.
This was also the main problem in \cite[(4.2)]{Mironov}.
For $r=2$ the question is rather trivial and will be discussed in
Subsection~(\ref{squareCase}).
Our main result is an affirmative answer for the case $r=3$:

\begin{theorem}
\label{th-bbb}
Every \mes\ of line bundles on $(\PP^1)^3$, i.e.\ consisting of eight
elements, is full.
\end{theorem}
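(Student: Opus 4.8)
The plan is to reduce the statement to the purely combinatorial Theorem~\ref{th-bbbCont} by means of the dictionary of Subsection~(\ref{basicDefP}). Identify $\Pic(\PP^1)^3$ with $\Z^3$ and, for $a\in\Z^3$, let $\CO(a)$ denote the corresponding line bundle on $X=(\PP^1)^3$. First I would recall (Definition~\ref{def-fesProdPd}) that under this identification an exceptional sequence of line bundles is exactly an ordered subset $\cl\subset\Z^3$ satisfying the adjacency condition of Subsection~(\ref{combLang}): indeed $\gExt^\bullet_X(\CO(\cl^i),\CO(\cl^j))\cong\bigotimes_{k=1}^{3}\gH^\bullet(\PP^1,\CO(\cl^j_k-\cl^i_k))$ by K\"unneth, and $\gH^\bullet(\PP^1,\CO(d))$ vanishes precisely for $d=-1$. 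Since each $\CO(a)$ is exceptional on $X$, the sequence $\cl$ is \emph{full} iff the smallest thick triangulated subcategory $\CT\subseteq\CD(X)$ containing $\CO(\cl^0),\dots,\CO(\cl^{7})$ is all of $\CD(X)$; so the goal is to prove $\CO(a)\in\CT$ for every $a\in\Z^3$.

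The crucial point is that the contamination rule of Definition~\ref{def-contamination} is valid on the categorical side. Concretely, I claim: if $a\in\Z^3$, $k\in\{1,2,3\}$ and $\CO(a),\CO(a+e_k)\in\CT$, then $\CO(a+m\,e_k)\in\CT$ for all $m\in\Z$. To see this, pull back the Euler sequence of the $k$-th factor $\PP^1$ and twist it, obtaining short exact sequences
\[
0\to\CO(a-e_k)\to\CO(a)^{\oplus 2}\to\CO(a+e_k)\to 0
\qquad\text{and}\qquad
0\to\CO(a)\to\CO(a+e_k)^{\oplus 2}\to\CO(a+2e_k)\to 0 .
\]
Reading these as distinguished triangles in $\CD(X)$ and using that $\CT$ is closed under finite direct sums, shifts and cones puts $\CO(a-e_k)$ and $\CO(a+2e_k)$ into $\CT$; induction in both directions along the line yields the claim. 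Hence the set $A:=\{a\in\Z^3 : \CO(a)\in\CT\}$ has exactly the closure property of Definition~\ref{def-contamination}: an affine line parallel to a coordinate axis that meets $A$ in two adjacent points is contained in $A$. Since $\CO(\cl^0),\dots,\CO(\cl^7)\in\CT$, the seed $\cl$ lies in $A$, and therefore $\cont(\cl)\subseteq A$.

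Now I invoke Theorem~\ref{th-bbbCont}. The sequence $\cl$ has the maximal cardinality $8$, so that theorem gives $\cont(\cl)=\Z^3$; combined with the previous paragraph this forces $A=\Z^3$, i.e.\ every line bundle on $X$ lies in $\CT$. Finally, $\CD(X)$ is generated by the eight line bundles $\{\CO(a) : a\in\{0,1\}^3\}$ — the exterior tensor product of three copies of Beilinson's full exceptional sequence $\CO,\CO(1)$ on $\PP^1$ — hence a fortiori by all of $\{\CO(a) : a\in\Z^3\}\subseteq\CT$. Consequently $\CT=\CD(X)$ and $\cl$ is full, as desired.

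The entire quantitative difficulty of the argument is concentrated in Theorem~\ref{th-bbbCont}, which is combinatorial and is the real content of the paper; granting it, the steps above are essentially formal. Within this reduction the only delicate point is the soundness of the contamination rule: one must make sure that ``two adjacent infected lattice points'' is matched \emph{precisely} by ``$\CO(a),\CO(a+e_k)\in\CT$'', so that the lattice closure $\cont(\cl)$ is genuinely contained in $A$ rather than merely related to it — and this is exactly what the twisted Euler sequences above secure.
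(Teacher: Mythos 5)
Your proposal is correct and follows essentially the same route as the paper: the paper packages your K\"unneth computation and your twisted Euler/Koszul-sequence justification of the contamination rule into the definitions of Subsections~(2.2)--(2.4) (the immaculate locus for products of projective spaces and the contamination procedure), and then delegates the entire content of Theorem~\ref{th-bbb} to the combinatorial Theorem~\ref{th-bbbCont}, exactly as you do. Your write-up merely makes explicit the categorical soundness of the dictionary, which the paper treats as definitional plus a motivating remark about the Koszul complex.
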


This theorem appears later, as Theorem~\ref{th-bbbCont}
in Subsection~(\ref{Pad}),
in a slightly different form. There it is formulated in the combinatorial
manner as it was already announced in Subsection~(\ref{combLang}) --
that is, making use of the language of contaminations which will
be introduced in the
Subsections (\ref{specialProdProj}) and (\ref{contamProc}).
It should be mentioned that even in this very special case of $r=3$
we depend on the usage of computers.

\subsection{Previous work}
\label{prevWork}
Related work into the direction of the above theorem is contained in
the recent preprint \cite{pic2Fano} dealing with the case of
toric {\scc Fano} varieties of {\scc Picard} rank two and dimensions $3$ or $4$.
This is going to be generalized to arbitrary smooth, projective
toric varieties of {\scc Picard} rank two in \cite{fesFano}. The latter contains the
case of $X=\PP^d\times\PP^e$, e.g., $X=(\PP^1)^2$.
The result of the present paper is the first
step into the direction of a higher {\scc Picard} rank.
\\[1ex]
{\em Acknowledgments:} We would like to thank Lutz Hille pointing out to us
that the lack of phantom categories on $(\PP^1)^3$ was still a problem.
Moreover, we thank Christian Haase for encouraging us to publish this note
-- despite the fact that we were not able to solve the problem
in a pure way, i.e.~without the usage of a small computer search at the end.
Finally, we would like to thank Frederik Witt for many discussions
about this subject.

\section{The basic setup} \label{basicSetup}

\subsection{Basic definitions}\label{basicDefP}
Let us recall the basic definitions. We restrict ourselves to the case of line
bundles.

\begin{definition}
\label{def-fes}
1) A sequence $\cl=[\CL_0,\ldots,\CL_{m-1}]$ of line bundles on a smooth,
projective variety $X$ is called {\em exceptional} if
there are no backwards homomorphisms, i.e.~for each $i<j$ we have
$\gHom_{\CD(X)}(\CL_{j},\CL_i[*])=0$.
Here we denote by $\CD(X):=\CD^b(X)$ the bounded derived category, and
$*$ refers to arbitrary shifts.
In other words,
$\gH^k(X,\CL_i\otimes\CL_j^{-1})=0$ for all $k\in\Z$.
\\[1ex]
2) An exceptional sequence $\cl$ is called {\em maximal}
if $m=|\cl|=n:=\rank K_0(X)$.
\\[1ex]
3) An exceptional sequence $\cl$ is called {\em full} if the set
$\{\CL_0,\ldots,\CL_{m-1}\}$ generates the derived category $\CD(X)$. That is, the
latter is the smallest triangulated category containing these sheaves.
\end{definition}

Note that, in contrast to (1),
the properties (2) and (3) of the previous definition
do not depend on the particular
ordering within the sequence $\cl$.
For smooth, projective toric varieties $X=\toric(\Sigma)$,
we can identify $\Cl(X)=\Z^r$
with $r=\#\Sigma(1)-\dim X$.
Then, the classes $\cl^i=[\CL_i]$ correspond to certain points in $\Z^r$,
and the exceptionality condition asks for
\rule{0pt}{16pt}$\vect{\cl^i\,\cl^{j}}=\cl^{j}-\cl^i\in -\Imm(X)$ for $i<j$
where $\Imm(X)\subseteq\Z^r$ denotes the {\em immaculate locus}, i.e.\
the set of those classes of line bundles carrying no cohomology at all,
including the $0$-th one.
See \cite{immaculate} for a discussion of this interesting region.

\subsection{Products of projective spaces}
\label{specialProdProj}
Assume now that $X=\PP^{d_1}\times\ldots\times\PP^{d_r}$.
In contrast to arbitrary smooth, projective toric varieties,
this special case provides a very explicit and clear description of the
immaculate locus.
Since the invertible immaculate sheaves on $\PP^d$ are
$\CO(-1), \CO(-2), \ldots, \CO(-d)$, we know that
$$
-\Imm(X)=\{a\in\Z^r\kst 1\leq a_\nu\leq d_\nu\;\mbox{for some }
\nu=1,\ldots,r\}.
$$
Thus, everything
can be described in a purely combinatorial language. According to this,
and besides a short motivating remark at the beginning of
Subsection~(\ref{contamProc}),
\begin{center}
{\em no algebraic geometry will appear beyond this point}.
\end{center}
For any ordered subset $\cl\subseteq\Z^r$ we will
encode the particular position
of its elements by the upper index, i.e.\
$\cl=[\cl^0,\ldots,\cl^{m-1}]$. In contrast, the coordinates of a single
$\cl^i=(\cl^i_1,\ldots,\cl^i_r)\in\Z^r$ are indicated by lower indices.
Now, the content of Definition~\ref{def-fes} can be rewritten as follows:

\begin{definition}
\label{def-fesProdPd}
Assume that we are given a dimension vector
$d=(d_1,\ldots,d_r)\in\N^r$ with $r\geq 1$ and $d_\nu\geq 1$
for $\nu=1,\ldots,r$.
\\[1ex]
1) A sequence $\cl=[\cl^0,\ldots,\cl^{m-1}]$ in $\Z^r$ is called
{\em $d$-exceptional} if, for each $i<j$, there is an index
$\nu=\nu(i,j)\in\{1,\ldots,r\}$ such that
$$
(\cl^j-\cl^i)_\nu=\cl^j_\nu-\cl^i_\nu \in\{1,2,\ldots,d_\nu\}.
$$
2) A $d$-exceptional sequence $\cl$ is called {\em maximal}
if $m=|\cl|=n(d):=\prod_{\nu=1}^r(d_\nu+1)$.
\\[1ex]
3) A $d$-exceptional sequence $\cl$ is called {\em full} if
it $d$-contaminates the whole lattice $\Z^r$ where the latter notion will
be explained in the upcoming Definition~\ref{def-contamination}.
\end{definition}

\subsection{The contamination procedure}
\label{contamProc}
The following definition of the $d$-conta\-mi\-na\-tion process arises from
the {\scc Koszul} complex on $\PP^d$
$$
\xymatrixcolsep{1.8em}
\xymatrix{
0 \ar[r] &
\CO \ar[r] \ar@{=}[d] &
\CO(1)^{d+1} \ar[r] \ar@{=}[d] &
\CO(2)^{d+1\choose 2} \ar[r] \ar@{=}[d] &
\ldots \ar[r] &
\CO(d+1) \ar[r] \ar@{=}[d] &
0
\\
0 \ar[r] &
\Lambda^0\CO(1)^{d+1} \ar[r] &
\Lambda^1\CO(1)^{d+1} \ar[r] &
\Lambda^2\CO(1)^{d+1} \ar[r] &
\ldots \ar[r] &
\Lambda^{d+1}\CO(1)^{d+1} \ar[r] &
0
}
$$
showing that
the successive sheaves
$\CO, \CO(1), \ldots, \CO(d)$ generate $\CO(d+1)$ and, eventually, the
whole line $\Pic\PP^d=\Z$.

\begin{definition}
\label{def-contamination}
Let $d\in\N^r$ and $S\subseteq\Z^r$ an arbitrary subset.
Then, with $e_\nu$ denoting the $\nu$-th canonical basis vector of $\Z^r$,
the elements of the set
$$
\textstyle
\cont(S)\;:=\; S \;\cup\;
\Big(\bigcup_{\nu=1}^r \bigcup_{p,\,p+e_\nu,\ldots,\,p+d_\nu e_\nu\,\in\, S}
\,
(p+\Z \cdot e_\nu)\Big) \;\subseteq\;\Z^r
$$
are called {\em directly $d$-contaminated} from $S$.
This gives rise to the inductive spreading
$\cont^0(S):= S$ and
$$
\textstyle
\cont^k(S):=\cont\big(\cont^{k-1}(S)\big)
\hspace{0.5em}\mbox{for }\;k\geq 1
$$
which contain the results of $d$-contamination in at most $k$ steps.
Finally, the elements of
the overall union $\cont^\infty(S):=\bigcup_{k\geq 0} \cont^k(S)$
are simply called {\em $d$-contaminated}.
\end{definition}

\subsection{The special case $(\PP^1)^r$} \label{Pad}\label{adaptSpecial}
Now, we focus on the special case $d=(1,\ldots,1)\in\N^r$.
The notion of $d$-exceptionality will simply be called {\em exceptionality} then.
By Definition~\ref{def-fesProdPd},
it means that, for each $i<j$, there is an index
$\nu=\nu(i,j)\in\{1,\ldots,r\}$ such that
$$
(\cl^j-\cl^i)_\nu=\cl^j_\nu-\cl^i_\nu =1.
$$
The size of a maximal exceptional sequence is $n=2^r$.
The notion of $d$-contamination from
Subsection~(\ref{specialProdProj}) will simply be called {\em contamination}.
The essential part of Definition~\ref{def-contamination}, i.e.\
$$
\textstyle
\cont(S)\;=\; S \;\cup\;
\Big(\bigcup_{\nu=1}^r \bigcup_{p,\,p+e_\nu\in S} \,(p+\Z \cdot e_\nu)\Big)
\;\subseteq\;\Z^r
$$
does now just mean that two
adjacent lattice points of $\Z^r$ infect the whole affine line they span,
cf.~Figure~\ref{fig:fourSteps} before.
And here is our main result using the contamination language; it is equivalent
to Theorem~\ref{th-bbb} from the introduction:

\begin{theorem}
\label{th-bbbCont}
Let $r=3$. Then every $(1,1,1)$-exceptional sequence $\cl\subset \Z^3$
consists of at most eight elements. Moreover, if $|\cl|$ equals
this maximal number, then $\cl$ contaminates the whole lattice $\Z^3$.
\end{theorem}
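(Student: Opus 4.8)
The plan is to split the statement into its two halves: the cardinality bound $|\cl|\le 8$, and the fullness (contamination of all of $\Z^3$) for sequences attaining $|\cl|=8$. For the first half I would argue combinatorially on the ``difference set'' $D=\{\cl^j-\cl^i : i<j\}$. Exceptionality forces every element of $D$ to lie in one of the three coordinate hyperplane-slabs $\{a : a_\nu=1\}$. The key observation is that modulo $2$ the coordinates of the $\cl^i$ take values in $\{0,1\}^3$, which has exactly $8$ elements; the plan is to show that two points of $\cl$ cannot be congruent mod $2$ in all three coordinates. Indeed if $\cl^i\equiv\cl^j\pmod 2$ coordinatewise with $i<j$, then $\cl^j_\nu-\cl^i_\nu$ is even for every $\nu$, hence never equal to $1$, contradicting exceptionality. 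This already gives $|\cl|\le 2^3=8$. (For general $r$ the same argument gives $|\cl|\le 2^r$, which is the bound announced in the combinatorial subsection.)

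For the second half, fix an exceptional sequence $\cl=[\cl^0,\ldots,\cl^7]$ with $8$ elements. By the parity argument above, the residues $\bar\cl^0,\ldots,\bar\cl^7$ are exactly all of $(\Z/2)^3$, each occurring once; after translating $\cl$ by a lattice vector (which commutes with contamination and preserves exceptionality) we may assume $\cl^0=(0,0,0)$. The goal is to show $\cont^\infty(\cl)=\Z^3$. I would first record the two basic ``moves'': if $p,p+e_\nu$ are both contaminated then the whole line $p+\Z e_\nu$ is contaminated; iterating, once two full parallel lines at distance $e_\nu$ (for the appropriate $\nu$) are contaminated one sweeps out a whole plane, and once two parallel planes are contaminated one sweeps out all of $\Z^3$. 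So it suffices to contaminate two adjacent lattice planes, for which it suffices to contaminate, in each of them, two adjacent lattice lines, for which it suffices to find, in each such line, two adjacent contaminated points. The combinatorial heart is therefore to show that the seed $\cl$ — a transversal of $(\Z/2)^3$ — always, after finitely many contamination steps, produces such a configuration.

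**Reduction and the computer step.** The difficulty is that a priori there are infinitely many exceptional $8$-element sequences (the $\cl^i$ range over all of $\Z^3$ subject only to the sign/ordering constraints), so one cannot simply enumerate them. The plan is to bound the relevant data. Reorder so that $\cl^0=0$; then for each $i$, walking $0=\cl^0\to\cl^i$ through the chain of pairwise constraints $\nu(i-1,i),\ldots$ shows each coordinate $\cl^i_\nu$ is controlled — more precisely the exceptionality inequalities, together with the fact that all $8$ residues are distinct mod $2$, confine each $\cl^i$ to a bounded box (one shows $\cl^i_\nu\in\{-c,\ldots,c\}$ for an explicit small $c$, using that a coordinate difference equal to $1$ can occur ``in each direction'' only a bounded number of times among $8$ points with distinct residues). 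This leaves a finite, explicitly enumerable list of configurations (up to the translation and the obvious $S_3$ symmetry permuting coordinates and the reflection symmetries), and for each representative one runs the contamination procedure $\cont^k$ until it stabilizes and checks that it equals $\Z^3$ — equivalently, until it contains two adjacent full parallel planes, since by the sweeping lemma that certifies contamination of everything. The main obstacle, and the reason the paper concedes a computer is needed, is precisely this last step: there is no evident uniform ``infection pattern'' that works for every transversal of $(\Z/2)^3$ simultaneously, so the finite case-check — after the reductions above have made it finite — does the real work, with the preceding lemmas (parity bound, translation/symmetry reduction, boundedness of the box, the line/plane sweeping principle) serving to cut the search down to a tractable size and to turn ``contaminates $\Z^3$'' into a finitely-checkable condition.
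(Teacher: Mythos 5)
Your first half (the parity argument giving $|\cl|\le 8$) is exactly the paper's argument for the bijectivity of $\Phi:\cl\to(\Z/2\Z)^3$, and it is correct. The gap is in your finiteness reduction for the second half. You claim that exceptionality together with the distinctness of residues mod $2$ confines each $\cl^i$ (after normalizing $\cl^0=0$) to a box $\{-c,\ldots,c\}^3$ for an explicit small $c$. This is false: exceptionality only demands that for each pair $i<j$ \emph{some} coordinate of $\cl^j-\cl^i$ equals $1$, and the remaining coordinates are completely unconstrained. The paper exhibits, for every $n\in\N$, the maximal exceptional sequence $(0,0,0),(1,n,n),(1,n,n+1),(1,n+1,0),(2,1,0),(2,n,1),(n,n+1,1),(n+1,n+1,1)$, whose width is $n+2$ in all three directions. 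So the set of configurations you propose to enumerate is infinite, and the computer check cannot be run as you describe.

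What is actually needed to make the problem finite --- and what the paper spends most of its length establishing --- is a normalization step: consecutive empty layers (in any coordinate direction) can be collapsed to a single empty layer without affecting exceptionality, maximality, or fullness; moreover the non-empty layers organize into blocks of width at least two whose outer layers are no heavier than their inner neighbours, the even and odd layers of each block carry equal weight, any two layers jointly containing more than $2^{r-1}$ points must be adjacent, and some direction must have a layer with at least three points (pigeonhole on the three planes adjacent to $\cl^0$). Together with the prior disposal of the case of a full layer of size $4$ (which reduces to the two-dimensional statement), this pins the normalized sequence into a $(6\times 6\times 11)$- or $(6\times 9\times 9)$-grid, which is then searched by computer. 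Your sweeping reformulation of fullness (two adjacent contaminated planes suffice) is fine but is not where the difficulty lies; without the empty-layer reduction and the block/layer bookkeeping your enumeration does not terminate.
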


The proof is contained in Section~\ref{compRThree}.

\subsection{The width of exceptional sequences}
\label{widthFES}
We conclude this setup section with another basic notion.
For any finite subset $S\subseteq\Z^r$ 
and $\nu\in\{1,\ldots,r\}$,
we define the $\nu$-width of $S$
as
$$
\width_\nu(S):=\max_{s\in S} s_\nu - \min_{s\in S} s_\nu +1.
$$
That is, $\width_\nu(S)$ is the smallest number $w\in\N$ such that $S$ fits
into
$w$ consecutive layers, i.e.\ integral hyperplanes
$[x_\nu=\mbox{const}]$ in $\nu$-direction.
In \cite{fesFano} it was shown that (maximal) exceptional sequences
for $X=\PP^{d_1}\times\PP^{d_2}$, i.e.~in the case $r=2$
of the setting of Subsection~(\ref{specialProdProj}),
any exceptional sequence
$\cl$ satisfies either $\width_1(\cl)\leq 2d_1+1$ or
$\width_2(\cl)\leq 2d_2+1$. This was an essential step for proving that
maximal
sequences are always full. However, as we already have seen in
Figure~\ref{fig:fourSteps} and will see in
Example~\ref{ex-cubeFourFourFour}, this simple kind of bounds do not
stay true for $r\geq 3$.

\section{Examples}\label{firstEx}

\subsection{The case of $r=1$}\label{lineCase}
There is not much to say in this case -- \mes s consist of two adjacent points
in $\Z^1$. And they contaminate the whole line immediately.

\subsection{The case of $r=2$}\label{squareCase}
Up to a possible switch of the coordinates,
there are two principal types of maximal exceptional
configurations -- they are shown in Figure~\ref{fig:planeSeq}.
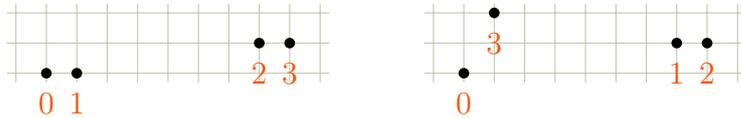
\begin{figure}[ht]
$
\begin{tikzpicture}[scale=0.4]
\draw[color=oliwkowy!40] (-2.3,-1.3) grid (8.3,1.3);
\fill[thick, color=black]
 (-1,-1) circle (5pt) (0,-1) circle (5pt)
 (6,0) circle (5pt) (7,0) circle (5pt);
\draw[thick, color=intOrange]
 (-1,-2) node{$0$} (0,-2) node{$1$} (6,-1) node{$2$} (7,-1) node{$3$};
\end{tikzpicture}
\hspace{3em}
\begin{tikzpicture}[scale=0.4]
\draw[color=oliwkowy!40] (-2.3,-1.3) grid (8.3,1.3);
\fill[thick, color=black]
 (-1,-1) circle (5pt) (0,1) circle (5pt)
 (6,0) circle (5pt) (7,0) circle (5pt);
\draw[thick, color=intOrange]
 (-1,-2) node{$0$} (0,0) node{$3$} (6,-1) node{$1$} (7,-1) node{$2$};
\end{tikzpicture}
\vspace{-1ex}
$
\caption{Plane exceptional sequences}
\label{fig:planeSeq}
\end{figure}
In both cases, the pair of points on the
central horizontal line $\ell$ can be arbitrarily shifted
along $\ell$. In any case,
the whole line $\ell$ becomes infected first, causing further
``vertical contaminations'' from the second pair of points.
\\[1ex]
In most cases, the numbering, i.e.\ the right
``exceptional ordering'' of the points
(indicated by the red labels $0,\ldots,3$) is
unique for both types -- but for a few shifts along $\ell$,
there remains a choice.
This classification result follows from simple combinatorial arguments --
or, alternatively, from the general theory developed later.
\\[1ex]
Note that the (unique) order of the right-hand example from Figure~\ref{fig:planeSeq} 
is horizontally lexicographic but not vertically lexicographic.

\subsection{The case of $r=3$}\label{cubeCase}
We are looking for configurations of 8 points in $\Z^3$.
The standard \fes\
corresponds to the 3-dimensional $(2\times 2\times 2)$-grid.
For this the contamination of the whole lattice $\Z^3$ is immediate.
However, there are more interesting and prettier examples.

\begin{example}
\label{ex-cubeFourFourFour}
A quite symmetric \mes\ is
depicted in Figure~\ref{fig:cubeTetrahedra}.
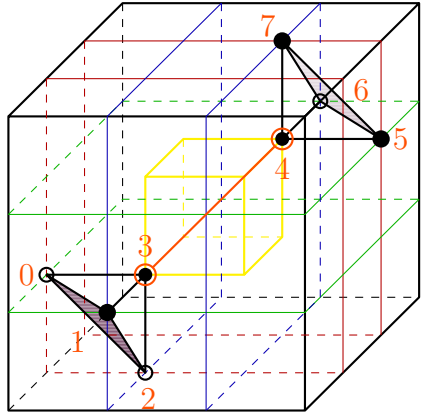
\begin{figure}[ht]
$
\begin{tikzpicture}[scale=1.3,join=round,
x={(1,0,0)},y={(0,1,0)},z={(0,0,-1)}]
\draw[thick, color=yellow]
 (1,1,1) -- (2,1,1) -- (2,2,1) -- (1,2,1) -- cycle
 (2,1,1) -- (2,1,2) -- (2,2,2) -- (2,2,1)
 (1,2,1) -- (1,2,2) -- (2,2,2);
\draw[dashed, color=yellow]
 (1,2,2) -- (1,1,2) -- (2,1,2) (1,1,1) -- (1,1,2);
\shade[left color=white!90!coverD, right color=coverD, shading angle=0,
       opacity=0.5]
 (0,1,1) -- (1,0,1) -- (1,1,0) -- cycle
 (3,2,2) -- (2,3,2) -- (2,2,3) -- cycle;
\draw[thick, color=black]
 (0,0,0) -- (3,0,0) -- (3,3,0) -- (0,3,0) -- cycle
 (3,0,0) -- (3,0,3) -- (3,3,3) -- (3,3,0)
 (0,3,0) -- (0,3,3) -- (3,3,3);
\draw[dashed, color=black]
 (0,3,3) -- (0,0,3) -- (3,0,3) (0,0,0) -- (0,0,3);
\draw[color=coverA]
 (3,0,1) -- (3,3,1) -- (0,3,1)
 (3,0,2) -- (3,3,2) -- (0,3,2);
\draw[dashed, color=coverA]
 (0,3,1) -- (0,0,1) -- (3,0,1)
 (0,3,2) -- (0,0,2) -- (3,0,2);
\draw[color=coverB]
 (0,1,0) -- (3,1,0) -- (3,1,3)
 (0,2,0) -- (3,2,0) -- (3,2,3);
\draw[dashed, color=coverB]
 (0,1,0) -- (0,1,3) -- (3,1,3)
 (0,2,0) -- (0,2,3) -- (3,2,3);
\draw[color=coverC]
 (1,0,0) -- (1,3,0) -- (1,3,3)
 (2,0,0) -- (2,3,0) -- (2,3,3);
\draw[dashed, color=coverC]
 (1,3,3) -- (1,0,3) -- (1,0,0)
 (2,3,3) -- (2,0,3) -- (2,0,0);
\fill[color=black]
 (1,1,1) circle (2pt) (2,2,2) circle (2pt)
 (1,1,0) circle (2.5pt) (3,2,2) circle (2.5pt) (2,3,2) circle (2.5pt);
\draw[thick, color=black]
 (0,1,1) circle (2pt) (1,0,1) circle (2pt) (2,2,3) circle (2pt);
\draw[thick, color=intOrange]
 (-0.2,1.0,1.0) node{$0$} (1.3,0.0,0.3) node{$2$} (0.7,0.7,0.0) node{$1$}
 (1.0,1.3,1.0) node{$3$} (2.0,1.7,2.0) node{$4$} (3.2,2.0,2.0) node{$5$}
 (2.3,2.0,3.3) node{$6$} (1.7,3.0,2.4) node{$7$};
\draw[thick, color=black]
 (1,1,1) -- (0,1,1) (1,1,1) -- (1,0,1) (1,1,1) -- (1,1,0)
 (0,1,1) -- (1,0,1) -- (1,1,0) -- cycle
 (2,2,2) -- (3,2,2) (2,2,2) -- (2,3,2) (2,2,2) -- (2,2,3)
 (3,2,2) -- (2,3,2) -- (2,2,3) -- cycle;
\draw[thick, color=intOrange]
 (1,1,1) circle (3pt) (2,2,2) circle (3pt);
\draw[thick, color=intOrange]
 (1.05,1.05,1.05) -- (1.95,1.95,1.95);
\end{tikzpicture}
$
\caption{Symmetric $4\times 4\times 4$ grid}
\label{fig:cubeTetrahedra}
\end{figure}
In the proposed numbering of the eight points of $\cl$
the order of the three vertices on both
of the tilted, regular triangles does not matter at all.
The only mandatory ordering that matters is
$$
\begin{array}{l}
[\mbox{triangle down left}] <
[\mbox{lower left red point}] <
\\
{}\hspace*{10em}
<[\mbox{upper right red point}] <
[\mbox{triangle up right}],
\end{array}
$$
that is, $\{\cl^0,\cl^1,\cl^2\} < \cl^3 < \cl^4 <\{\cl^5,\cl^6,\cl^7\}$.
The contamination process starts with the three lines through the
points $\cl^3$ and $\cl^4$, respectively. They infect all remaining six
vertices of the central yellow cube.
\end{example}

\begin{example}
\label{ex-cubeFiveFiveFive}
An even larger, but less symmetric example
inside a $5 \times 5 \times 5$ grid
is presented in Figure~\ref{fig:cubeFive}.
One possible numbering of the vertices is
$$
{\color{coverE} (1,0,0)}~
{\color{coverA} (0,1,1)}~
{\color{coverA} (1,3,1)}~
{\color{coverA} (1,4,1)}~
{\color{coverB} (3,1,2)}~
{\color{coverB} (4,1,2)}~
{\color{coverC} (2,2,3)}~
{\color{coverD} (2,2,4)}
$$
where the last coordinate indicates the height. The contamination process
begins with the central vertical line and a red and a green line in the layers
of height one and two, respectively.
\begin{figure}[ht]
$
\begin{tikzpicture}[scale=1.0,join=round,cap=round, x={(1,0,0)},y={(0,1,0)},z={(0,0,-1)}]
\draw[thick, color=black]
 (0,0,0) -- (4,0,0) -- (4,4,0) -- (0,4,0) -- cycle
 (4,0,0) -- (4,0,4) -- (4,4,4) -- (4,4,0)
 (0,4,0) -- (0,4,4) -- (4,4,4);
\draw[densely dashed, color=black] (0,0,4) -- (0,4,4) (0,0,4) -- (4,0,4) (0,0,4) -- (0,0,0);
\draw[color=black] (4,0,1) -- (4,4,1) -- (0,4,1) (4,0,2) -- (4,4,2) -- (0,4,2) (4,0,3) -- (4,4,3) -- (0,4,3);
\draw[color=black] (1,0,0) -- (1,4,0) -- (1,4,4) (2,0,0) -- (2,4,0) -- (2,4,4) (3,0,0) -- (3,4,0) -- (3,4,4);
\draw[densely dashed, color=black] (0,0,1) -- (0,4,1) (0,0,1) -- (4,0,1) (0,0,2) -- (0,4,2) (0,0,2) -- (4,0,2) (0,0,3) -- (0,4,3) (0,0,3) -- (4,0,3);
\draw[densely dashed, color=black] (1,0,4) -- (1,4,4) (1,0,4) -- (1,0,0) (2,0,4) -- (2,4,4) (2,0,4) -- (2,0,0) (3,0,4) -- (3,4,4) (3,0,4) -- (3,0,0);
\draw[color=coverA] (0,1,0) -- (4,1,0) -- (4,1,4);
\draw[color=coverB] (0,2,0) -- (4,2,0) -- (4,2,4);
\draw[color=coverC] (0,3,0) -- (4,3,0) -- (4,3,4);
\draw[densely dashed, color=coverA] (0,1,4) -- (0,1,0) (0,1,4) -- (4,1,4);
\draw[densely dashed, color=coverB] (0,2,4) -- (0,2,0) (0,2,4) -- (4,2,4);
\draw[densely dashed, color=coverC] (0,3,4) -- (0,3,0) (0,3,4) -- (4,3,4);
\draw[thick, color=yellow] (1,0,3) -- (1,4,3) (3,0,1) -- (3,4,1) (2,0,2) -- (2,4,2);
\fill[color=coverA] (0,1,1) circle (3pt);
\fill[color=coverE] (1,0,0) circle (3pt);
\fill[color=coverA] (1,1,3) circle (3pt);
\fill[color=coverA] (1,1,4) circle (3pt);
\fill[color=coverB] (3,2,1) circle (3pt);
\fill[color=coverB] (4,2,1) circle (3pt);
\fill[color=coverC] (2,3,2) circle (3pt);
\fill[color=coverD] (2,4,2) circle (3pt);
\end{tikzpicture}
\hspace*{2cm}
\begin{tikzpicture}[scale=0.75]
\multido{\n=0.0+1.5}{5}{
\multido{\na=1+1}{4}{\draw[thin,color=black!40] (\na,\n) -- ++(4/3,4/3) (1/3*\na,1/3*\na+\n) -- ++(4,0);}
\draw[color=black] (0,\n) -- ++(4,0) -- ++(4/3,4/3) -- ++(-4,0) -- cycle;
}
\fill[color=coverA] (0+1/3*1,1/3*1+1.5*1) circle (3pt);
\fill[color=coverE] (1+1/3*0,1/3*0+1.5*0) circle (3pt);
\fill[color=coverA] (1+1/3*3,1/3*3+1.5*1) circle (3pt);
\fill[color=coverA] (1+1/3*4,1/3*4+1.5*1) circle (3pt);
\fill[color=coverB] (3+1/3*1,1/3*1+1.5*2) circle (3pt);
\fill[color=coverB] (4+1/3*1,1/3*1+1.5*2) circle (3pt);
\fill[color=coverC] (2+1/3*2,1/3*2+1.5*3) circle (3pt);
\fill[color=coverD] (2+1/3*2,1/3*2+1.5*4) circle (3pt);
\end{tikzpicture}
$
\caption{Spanning the $5\times 5\times 5$ grid}
\label{fig:cubeFive}
\end{figure}
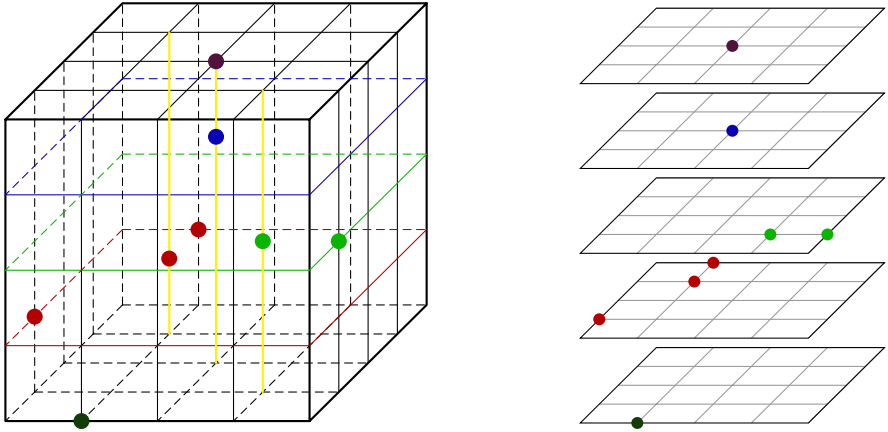
The whole process is illustrated in Figure~\ref{fig:cubeFiveInfect}.
Here, as we have already done in Figure~\ref{fig:fourSteps}, 
we have used bright red
color to mark recently infected points -- afterwards,
in the following steps, they will turn blue.
Note that the present example,
as it did Example~\ref{ex-cubeFourFourFour}, lacks
empty inner layers in any direction.
\end{example}

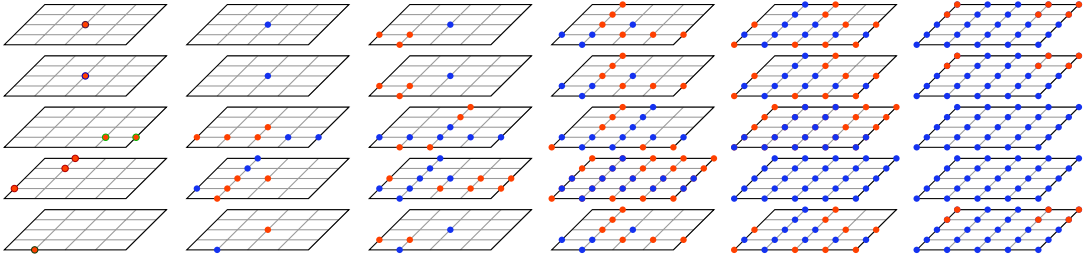
\begin{figure}[ht]
$
\begin{tikzpicture}[scale=0.4]
\begin{scope}[shift={(0,0)}]
\multido{\n=0.0+1.7}{5}{
\multido{\na=1+1}{4}{\draw[thin,color=black!40] (\na,\n) -- ++(4/3,4/3) (1/3*\na,1/3*\na+\n) -- ++(4,0);}
\draw[color=black] (0,\n) -- ++(4,0) -- ++(4/3,4/3) -- ++(-4,0) -- cycle;
}
\fill[color=coverH] (0+1/3*1,1/3*1+1.7*1) circle (3pt);
\fill[color=coverH] (1+1/3*3,1/3*3+1.7*1) circle (3pt);
\fill[color=coverH] (1+1/3*4,1/3*4+1.7*1) circle (3pt);
\fill[color=coverH] (3+1/3*1,1/3*1+1.7*2) circle (3pt);
\fill[color=coverH] (4+1/3*1,1/3*1+1.7*2) circle (3pt);
\fill[color=coverH] (2+1/3*2,1/3*2+1.7*3) circle (3pt);
\fill[color=coverH] (2+1/3*2,1/3*2+1.7*4) circle (3pt);
\fill[color=coverH] (1+1/3*0,1/3*0+1.7*0) circle (3pt);
\draw[color=coverA] (0+1/3*1,1/3*1+1.7*1) circle (3pt);
\draw[color=coverA] (1+1/3*3,1/3*3+1.7*1) circle (3pt);
\draw[color=coverA] (1+1/3*4,1/3*4+1.7*1) circle (3pt);
\draw[color=coverB] (3+1/3*1,1/3*1+1.7*2) circle (3pt);
\draw[color=coverB] (4+1/3*1,1/3*1+1.7*2) circle (3pt); 
\draw[color=coverC] (2+1/3*2,1/3*2+1.7*3) circle (3pt); 
\draw[color=coverD] (2+1/3*2,1/3*2+1.7*4) circle (3pt); 
\draw[color=coverE] (1+1/3*0,1/3*0+1.7*0) circle (3pt); 
\end{scope}
\begin{scope}[shift={(6,0)}]
\multido{\n=0.0+1.7}{5}{
\multido{\na=1+1}{4}{\draw[thin,color=black!40] (\na,\n) -- ++(4/3,4/3) (1/3*\na,1/3*\na+\n) -- ++(4,0);}
\draw[color=black] (0,\n) -- ++(4,0) -- ++(4/3,4/3) -- ++(-4,0) -- cycle;
}
\fill[color=coverG] (0+1/3*1,1/3*1+1.7*1) circle (3pt);
\fill[color=coverH] (0+1/3*1,1/3*1+1.7*2) circle (3pt);
\fill[color=coverG] (1+1/3*0,1/3*0+1.7*0) circle (3pt);
\fill[color=coverH] (1+1/3*0,1/3*0+1.7*1) circle (3pt);
\fill[color=coverH] (1+1/3*1,1/3*1+1.7*1) circle (3pt);
\fill[color=coverH] (1+1/3*1,1/3*1+1.7*2) circle (3pt);
\fill[color=coverH] (1+1/3*2,1/3*2+1.7*1) circle (3pt);
\fill[color=coverG] (1+1/3*3,1/3*3+1.7*1) circle (3pt);
\fill[color=coverG] (1+1/3*4,1/3*4+1.7*1) circle (3pt);
\fill[color=coverH] (2+1/3*1,1/3*1+1.7*2) circle (3pt);
\fill[color=coverH] (2+1/3*2,1/3*2+1.7*0) circle (3pt);
\fill[color=coverH] (2+1/3*2,1/3*2+1.7*1) circle (3pt);
\fill[color=coverH] (2+1/3*2,1/3*2+1.7*2) circle (3pt);
\fill[color=coverG] (2+1/3*2,1/3*2+1.7*3) circle (3pt);
\fill[color=coverG] (2+1/3*2,1/3*2+1.7*4) circle (3pt);
\fill[color=coverG] (3+1/3*1,1/3*1+1.7*2) circle (3pt);
\fill[color=coverG] (4+1/3*1,1/3*1+1.7*2) circle (3pt);
\end{scope}
\begin{scope}[shift={(12,0)}]
\multido{\n=0.0+1.7}{5}{
\multido{\na=1+1}{4}{\draw[thin,color=black!40] (\na,\n) -- ++(4/3,4/3) (1/3*\na,1/3*\na+\n) -- ++(4,0);}
\draw[color=black] (0,\n) -- ++(4,0) -- ++(4/3,4/3) -- ++(-4,0) -- cycle;
}
\fill[color=coverH] (0+1/3*1,1/3*1+1.7*0) circle (3pt);
\fill[color=coverG] (0+1/3*1,1/3*1+1.7*1) circle (3pt);
\fill[color=coverG] (0+1/3*1,1/3*1+1.7*2) circle (3pt);
\fill[color=coverH] (0+1/3*1,1/3*1+1.7*3) circle (3pt);
\fill[color=coverH] (0+1/3*1,1/3*1+1.7*4) circle (3pt);
\fill[color=coverH] (0+1/3*2,1/3*2+1.7*1) circle (3pt);
\fill[color=coverG] (1+1/3*0,1/3*0+1.7*0) circle (3pt);
\fill[color=coverG] (1+1/3*0,1/3*0+1.7*1) circle (3pt);
\fill[color=coverH] (1+1/3*0,1/3*0+1.7*2) circle (3pt);
\fill[color=coverH] (1+1/3*0,1/3*0+1.7*3) circle (3pt);
\fill[color=coverH] (1+1/3*0,1/3*0+1.7*4) circle (3pt);
\fill[color=coverH] (1+1/3*1,1/3*1+1.7*0) circle (3pt);
\fill[color=coverG] (1+1/3*1,1/3*1+1.7*1) circle (3pt);
\fill[color=coverG] (1+1/3*1,1/3*1+1.7*2) circle (3pt);
\fill[color=coverH] (1+1/3*1,1/3*1+1.7*3) circle (3pt);
\fill[color=coverH] (1+1/3*1,1/3*1+1.7*4) circle (3pt);
\fill[color=coverG] (1+1/3*2,1/3*2+1.7*1) circle (3pt);
\fill[color=coverG] (1+1/3*3,1/3*3+1.7*1) circle (3pt);
\fill[color=coverG] (1+1/3*4,1/3*4+1.7*1) circle (3pt);
\fill[color=coverH] (2+1/3*0,1/3*0+1.7*2) circle (3pt);
\fill[color=coverH] (2+1/3*1,1/3*1+1.7*1) circle (3pt);
\fill[color=coverG] (2+1/3*1,1/3*1+1.7*2) circle (3pt);
\fill[color=coverG] (2+1/3*2,1/3*2+1.7*0) circle (3pt);
\fill[color=coverG] (2+1/3*2,1/3*2+1.7*1) circle (3pt);
\fill[color=coverG] (2+1/3*2,1/3*2+1.7*2) circle (3pt);
\fill[color=coverG] (2+1/3*2,1/3*2+1.7*3) circle (3pt);
\fill[color=coverG] (2+1/3*2,1/3*2+1.7*4) circle (3pt);
\fill[color=coverH] (2+1/3*3,1/3*3+1.7*2) circle (3pt);
\fill[color=coverH] (2+1/3*4,1/3*4+1.7*2) circle (3pt);
\fill[color=coverH] (3+1/3*1,1/3*1+1.7*1) circle (3pt);
\fill[color=coverG] (3+1/3*1,1/3*1+1.7*2) circle (3pt);
\fill[color=coverH] (3+1/3*2,1/3*2+1.7*1) circle (3pt);
\fill[color=coverH] (4+1/3*1,1/3*1+1.7*1) circle (3pt);
\fill[color=coverG] (4+1/3*1,1/3*1+1.7*2) circle (3pt);
\fill[color=coverH] (4+1/3*2,1/3*2+1.7*1) circle (3pt);
\end{scope}
\begin{scope}[shift={(18,0)}]
\multido{\n=0.0+1.7}{5}{
\multido{\na=1+1}{4}{\draw[thin,color=black!40] (\na,\n) -- ++(4/3,4/3) (1/3*\na,1/3*\na+\n) -- ++(4,0);}
\draw[color=black] (0,\n) -- ++(4,0) -- ++(4/3,4/3) -- ++(-4,0) -- cycle;
}
\multido{\na=0+1}{5}{\multido{\nb=0+1}{5}{\fill[color=coverH] (\na+1/3*\nb,1/3*\nb+1.7*1) circle (3pt);}}
\fill[color=coverH] (0+1/3*0,1/3*0+1.7*2) circle (3pt);
\fill[color=coverG] (0+1/3*1,1/3*1+1.7*0) circle (3pt);
\fill[color=coverG] (0+1/3*1,1/3*1+1.7*1) circle (3pt);
\fill[color=coverG] (0+1/3*1,1/3*1+1.7*2) circle (3pt);
\fill[color=coverG] (0+1/3*1,1/3*1+1.7*3) circle (3pt);
\fill[color=coverG] (0+1/3*1,1/3*1+1.7*4) circle (3pt);
\fill[color=coverG] (0+1/3*2,1/3*2+1.7*1) circle (3pt);
\fill[color=coverG] (1+1/3*0,1/3*0+1.7*0) circle (3pt);
\fill[color=coverG] (1+1/3*0,1/3*0+1.7*1) circle (3pt);
\fill[color=coverG] (1+1/3*0,1/3*0+1.7*2) circle (3pt);
\fill[color=coverG] (1+1/3*0,1/3*0+1.7*3) circle (3pt);
\fill[color=coverG] (1+1/3*0,1/3*0+1.7*4) circle (3pt);
\fill[color=coverG] (1+1/3*1,1/3*1+1.7*0) circle (3pt);
\fill[color=coverG] (1+1/3*1,1/3*1+1.7*1) circle (3pt);
\fill[color=coverG] (1+1/3*1,1/3*1+1.7*2) circle (3pt);
\fill[color=coverG] (1+1/3*1,1/3*1+1.7*3) circle (3pt);
\fill[color=coverG] (1+1/3*1,1/3*1+1.7*4) circle (3pt);
\fill[color=coverH] (1+1/3*2,1/3*2+1.7*0) circle (3pt);
\fill[color=coverG] (1+1/3*2,1/3*2+1.7*1) circle (3pt);
\fill[color=coverH] (1+1/3*2,1/3*2+1.7*2) circle (3pt);
\fill[color=coverH] (1+1/3*2,1/3*2+1.7*3) circle (3pt);
\fill[color=coverH] (1+1/3*2,1/3*2+1.7*4) circle (3pt);
\fill[color=coverH] (1+1/3*3,1/3*3+1.7*0) circle (3pt);
\fill[color=coverG] (1+1/3*3,1/3*3+1.7*1) circle (3pt);
\fill[color=coverH] (1+1/3*3,1/3*3+1.7*2) circle (3pt);
\fill[color=coverH] (1+1/3*3,1/3*3+1.7*3) circle (3pt);
\fill[color=coverH] (1+1/3*3,1/3*3+1.7*4) circle (3pt);
\fill[color=coverH] (1+1/3*4,1/3*4+1.7*0) circle (3pt);
\fill[color=coverG] (1+1/3*4,1/3*4+1.7*1) circle (3pt);
\fill[color=coverH] (1+1/3*4,1/3*4+1.7*2) circle (3pt);
\fill[color=coverH] (1+1/3*4,1/3*4+1.7*3) circle (3pt);
\fill[color=coverH] (1+1/3*4,1/3*4+1.7*4) circle (3pt);
\fill[color=coverG] (2+1/3*0,1/3*0+1.7*2) circle (3pt);
\fill[color=coverH] (2+1/3*1,1/3*1+1.7*0) circle (3pt);
\fill[color=coverG] (2+1/3*1,1/3*1+1.7*1) circle (3pt);
\fill[color=coverG] (2+1/3*1,1/3*1+1.7*2) circle (3pt);
\fill[color=coverH] (2+1/3*1,1/3*1+1.7*3) circle (3pt);
\fill[color=coverH] (2+1/3*1,1/3*1+1.7*4) circle (3pt);
\fill[color=coverG] (2+1/3*2,1/3*2+1.7*0) circle (3pt);
\fill[color=coverG] (2+1/3*2,1/3*2+1.7*1) circle (3pt);
\fill[color=coverG] (2+1/3*2,1/3*2+1.7*2) circle (3pt);
\fill[color=coverG] (2+1/3*2,1/3*2+1.7*3) circle (3pt);
\fill[color=coverG] (2+1/3*2,1/3*2+1.7*4) circle (3pt);
\fill[color=coverG] (2+1/3*3,1/3*3+1.7*2) circle (3pt);
\fill[color=coverG] (2+1/3*4,1/3*4+1.7*2) circle (3pt);
\fill[color=coverH] (3+1/3*0,1/3*0+1.7*2) circle (3pt);
\fill[color=coverH] (3+1/3*1,1/3*1+1.7*0) circle (3pt);
\fill[color=coverG] (3+1/3*1,1/3*1+1.7*1) circle (3pt);
\fill[color=coverG] (3+1/3*1,1/3*1+1.7*2) circle (3pt);
\fill[color=coverH] (3+1/3*1,1/3*1+1.7*3) circle (3pt);
\fill[color=coverH] (3+1/3*1,1/3*1+1.7*4) circle (3pt);
\fill[color=coverG] (3+1/3*2,1/3*2+1.7*1) circle (3pt);
\fill[color=coverH] (4+1/3*0,1/3*0+1.7*2) circle (3pt);
\fill[color=coverH] (4+1/3*1,1/3*1+1.7*0) circle (3pt);
\fill[color=coverG] (4+1/3*1,1/3*1+1.7*1) circle (3pt);
\fill[color=coverG] (4+1/3*1,1/3*1+1.7*2) circle (3pt);
\fill[color=coverH] (4+1/3*1,1/3*1+1.7*3) circle (3pt);
\fill[color=coverH] (4+1/3*1,1/3*1+1.7*4) circle (3pt);
\fill[color=coverG] (4+1/3*2,1/3*2+1.7*1) circle (3pt);
\end{scope}
\begin{scope}[shift={(24,0)}]
\multido{\n=0.0+1.7}{5}{
\multido{\na=1+1}{4}{\draw[thin,color=black!40] (\na,\n) -- ++(4/3,4/3) (1/3*\na,1/3*\na+\n) -- ++(4,0);}
\draw[color=black] (0,\n) -- ++(4,0) -- ++(4/3,4/3) -- ++(-4,0) -- cycle;
}
\multido{\na=0+1}{5}{\multido{\nb=0+1}{5}{\fill[color=coverG] (\na+1/3*\nb,1/3*\nb+1.7*1) circle (3pt);}}
\multido{\na=0+1}{5}{\multido{\nb=0+1}{5}{\fill[color=coverH] (\na+1/3*\nb,1/3*\nb+1.7*2) circle (3pt);}}
\fill[color=coverH] (0+1/3*0,1/3*0+1.7*0) circle (3pt);
\fill[color=coverG] (0+1/3*0,1/3*0+1.7*2) circle (3pt);
\fill[color=coverH] (0+1/3*0,1/3*0+1.7*3) circle (3pt);
\fill[color=coverH] (0+1/3*0,1/3*0+1.7*4) circle (3pt);
\fill[color=coverG] (0+1/3*1,1/3*1+1.7*0) circle (3pt);
\fill[color=coverG] (0+1/3*1,1/3*1+1.7*2) circle (3pt);
\fill[color=coverG] (0+1/3*1,1/3*1+1.7*3) circle (3pt);
\fill[color=coverG] (0+1/3*1,1/3*1+1.7*4) circle (3pt);
\fill[color=coverH] (0+1/3*2,1/3*2+1.7*0) circle (3pt);
\fill[color=coverH] (0+1/3*2,1/3*2+1.7*3) circle (3pt);
\fill[color=coverH] (0+1/3*2,1/3*2+1.7*4) circle (3pt);
\fill[color=coverG] (1+1/3*0,1/3*0+1.7*0) circle (3pt);
\fill[color=coverG] (1+1/3*0,1/3*0+1.7*2) circle (3pt);
\fill[color=coverG] (1+1/3*0,1/3*0+1.7*3) circle (3pt);
\fill[color=coverG] (1+1/3*0,1/3*0+1.7*4) circle (3pt);
\fill[color=coverG] (1+1/3*1,1/3*1+1.7*0) circle (3pt);
\fill[color=coverG] (1+1/3*1,1/3*1+1.7*2) circle (3pt);
\fill[color=coverG] (1+1/3*1,1/3*1+1.7*3) circle (3pt);
\fill[color=coverG] (1+1/3*1,1/3*1+1.7*4) circle (3pt);
\fill[color=coverG] (1+1/3*2,1/3*2+1.7*0) circle (3pt);
\fill[color=coverG] (1+1/3*2,1/3*2+1.7*2) circle (3pt);
\fill[color=coverG] (1+1/3*2,1/3*2+1.7*3) circle (3pt);
\fill[color=coverG] (1+1/3*2,1/3*2+1.7*4) circle (3pt);
\fill[color=coverG] (1+1/3*3,1/3*3+1.7*0) circle (3pt);
\fill[color=coverG] (1+1/3*3,1/3*3+1.7*2) circle (3pt);
\fill[color=coverG] (1+1/3*3,1/3*3+1.7*3) circle (3pt);
\fill[color=coverG] (1+1/3*3,1/3*3+1.7*4) circle (3pt);
\fill[color=coverG] (1+1/3*4,1/3*4+1.7*0) circle (3pt);
\fill[color=coverG] (1+1/3*4,1/3*4+1.7*2) circle (3pt);
\fill[color=coverG] (1+1/3*4,1/3*4+1.7*3) circle (3pt);
\fill[color=coverG] (1+1/3*4,1/3*4+1.7*4) circle (3pt);
\fill[color=coverH] (2+1/3*0,1/3*0+1.7*0) circle (3pt);
\fill[color=coverG] (2+1/3*0,1/3*0+1.7*2) circle (3pt);
\fill[color=coverH] (2+1/3*0,1/3*0+1.7*3) circle (3pt);
\fill[color=coverH] (2+1/3*0,1/3*0+1.7*4) circle (3pt);
\fill[color=coverG] (2+1/3*1,1/3*1+1.7*0) circle (3pt);
\fill[color=coverG] (2+1/3*1,1/3*1+1.7*2) circle (3pt);
\fill[color=coverG] (2+1/3*1,1/3*1+1.7*3) circle (3pt);
\fill[color=coverG] (2+1/3*1,1/3*1+1.7*4) circle (3pt);
\fill[color=coverG] (2+1/3*2,1/3*2+1.7*0) circle (3pt);
\fill[color=coverG] (2+1/3*2,1/3*2+1.7*2) circle (3pt);
\fill[color=coverG] (2+1/3*2,1/3*2+1.7*3) circle (3pt);
\fill[color=coverG] (2+1/3*2,1/3*2+1.7*4) circle (3pt);
\fill[color=coverH] (2+1/3*3,1/3*3+1.7*0) circle (3pt);
\fill[color=coverG] (2+1/3*3,1/3*3+1.7*2) circle (3pt);
\fill[color=coverH] (2+1/3*3,1/3*3+1.7*3) circle (3pt);
\fill[color=coverH] (2+1/3*3,1/3*3+1.7*4) circle (3pt);
\fill[color=coverH] (2+1/3*4,1/3*4+1.7*0) circle (3pt);
\fill[color=coverG] (2+1/3*4,1/3*4+1.7*2) circle (3pt);
\fill[color=coverH] (2+1/3*4,1/3*4+1.7*3) circle (3pt);
\fill[color=coverH] (2+1/3*4,1/3*4+1.7*4) circle (3pt);
\fill[color=coverH] (3+1/3*0,1/3*0+1.7*0) circle (3pt);
\fill[color=coverG] (3+1/3*0,1/3*0+1.7*2) circle (3pt);
\fill[color=coverH] (3+1/3*0,1/3*0+1.7*3) circle (3pt);
\fill[color=coverH] (3+1/3*0,1/3*0+1.7*4) circle (3pt);
\fill[color=coverG] (3+1/3*1,1/3*1+1.7*0) circle (3pt);
\fill[color=coverG] (3+1/3*1,1/3*1+1.7*2) circle (3pt);
\fill[color=coverG] (3+1/3*1,1/3*1+1.7*3) circle (3pt);
\fill[color=coverG] (3+1/3*1,1/3*1+1.7*4) circle (3pt);
\fill[color=coverH] (3+1/3*2,1/3*2+1.7*0) circle (3pt);
\fill[color=coverH] (3+1/3*2,1/3*2+1.7*3) circle (3pt);
\fill[color=coverH] (3+1/3*2,1/3*2+1.7*4) circle (3pt);
\fill[color=coverH] (4+1/3*0,1/3*0+1.7*0) circle (3pt);
\fill[color=coverG] (4+1/3*0,1/3*0+1.7*2) circle (3pt);
\fill[color=coverH] (4+1/3*0,1/3*0+1.7*3) circle (3pt);
\fill[color=coverH] (4+1/3*0,1/3*0+1.7*4) circle (3pt);
\fill[color=coverG] (4+1/3*1,1/3*1+1.7*0) circle (3pt);
\fill[color=coverG] (4+1/3*1,1/3*1+1.7*2) circle (3pt);
\fill[color=coverG] (4+1/3*1,1/3*1+1.7*3) circle (3pt);
\fill[color=coverG] (4+1/3*1,1/3*1+1.7*4) circle (3pt);
\fill[color=coverH] (4+1/3*2,1/3*2+1.7*0) circle (3pt);
\fill[color=coverH] (4+1/3*2,1/3*2+1.7*3) circle (3pt);
\fill[color=coverH] (4+1/3*2,1/3*2+1.7*4) circle (3pt);
\end{scope}
\begin{scope}[shift={(30,0)}]
\multido{\n=0.0+1.7}{5}{
\multido{\na=1+1}{4}{\draw[thin,color=black!40] (\na,\n) -- ++(4/3,4/3) (1/3*\na,1/3*\na+\n) -- ++(4,0);}
\draw[color=black] (0,\n) -- ++(4,0) -- ++(4/3,4/3) -- ++(-4,0) -- cycle;
}
\multido{\na=0+1}{5}{\multido{\nb=0+1}{5}{\fill[color=coverG] (\na+1/3*\nb,1/3*\nb+1.7*1) circle (3pt);}}
\multido{\na=0+1}{5}{\multido{\nb=0+1}{5}{\fill[color=coverG] (\na+1/3*\nb,1/3*\nb+1.7*2) circle (3pt);}}
\multido{\na=0+1}{5}{\multido{\nb=0+1}{5}{\fill[color=coverG] (\na+1/3*\nb,1/3*\nb+1.7*3) circle (3pt);}}
\multido{\na=0+1}{5}{\multido{\nb=0+1}{5}{\fill[color=coverG] (\na+1/3*\nb,1/3*\nb+1.7*4) circle (3pt);}}
\multido{\na=0+1}{5}{\multido{\nb=0+1}{5}{\fill[color=coverG] (\na+1/3*\nb,1/3*\nb+1.7*0) circle (3pt);}}
\fill[color=coverH] (0+1/3*3,1/3*3+1.7*0) circle (3pt);
\fill[color=coverH] (0+1/3*3,1/3*3+1.7*3) circle (3pt);
\fill[color=coverH] (0+1/3*3,1/3*3+1.7*4) circle (3pt);
\fill[color=coverH] (0+1/3*4,1/3*4+1.7*0) circle (3pt);
\fill[color=coverH] (0+1/3*4,1/3*4+1.7*3) circle (3pt);
\fill[color=coverH] (0+1/3*4,1/3*4+1.7*4) circle (3pt);
\fill[color=coverH] (3+1/3*3,1/3*3+1.7*0) circle (3pt);
\fill[color=coverH] (3+1/3*3,1/3*3+1.7*3) circle (3pt);
\fill[color=coverH] (3+1/3*3,1/3*3+1.7*4) circle (3pt);
\fill[color=coverH] (3+1/3*4,1/3*4+1.7*0) circle (3pt);
\fill[color=coverH] (3+1/3*4,1/3*4+1.7*3) circle (3pt);
\fill[color=coverH] (3+1/3*4,1/3*4+1.7*4) circle (3pt);
\fill[color=coverH] (4+1/3*3,1/3*3+1.7*0) circle (3pt);
\fill[color=coverH] (4+1/3*3,1/3*3+1.7*3) circle (3pt);
\fill[color=coverH] (4+1/3*3,1/3*3+1.7*4) circle (3pt);
\fill[color=coverH] (4+1/3*4,1/3*4+1.7*0) circle (3pt);
\fill[color=coverH] (4+1/3*4,1/3*4+1.7*3) circle (3pt);
\fill[color=coverH] (4+1/3*4,1/3*4+1.7*4) circle (3pt);
\end{scope}
\end{tikzpicture}
$
\caption{Contaminating the whole $5\times 5\times 5$ grid (and $\Z^3$) in five steps}
\label{fig:cubeFiveInfect}
\end{figure}

\begin{example}
\label{ex-slowInfections}
According to Definition~\ref{def-contamination}, the contamination process
is an inductive procedure, i.e.\ it consists of a number of successive steps.
In Figure~\ref{fig:fourSteps}
we had depicted an example where the whole
lattice $\Z^3$ becomes contaminated in four steps. Its
starting \mes\ consisted of the points
$$
(0,0,0),\;(0,0,1),\;(0,1,2),\;(4,1,3),\;(5,1,3),\;(1,4,4),\;(1,5,4),\;(1,2,5).
$$
While, in Example~\ref{ex-cubeFiveFiveFive}, we had already presented a 
configuration
leading to a contamination process involving five steps, there are even
longer examples: The sequences
$$
\renewcommand{\arraystretch}{1.3}
\begin{array}{rcl}
&&[(0,0,0),(0,0,1),(0,1,4),(1,2,5),(1,4,4),(1,5,4),(4,1,3),(5,1,5)]
\hspace{1em}\mbox{and}
\\
&&[(0,0,0),(0,1,3),(1,0,4),(1,4,3),(1,5,5),(2,2,1),(4,1,4),(5,1,4)]
\end{array}
$$
require six or even seven steps, respectively.
\end{example}

\begin{example}
\label{ex-cubeNNN}
Allowing empty layers, as we have already done in
Example~\ref{ex-slowInfections}, leads to \mes s
that can stretch arbitrarily far in all directions.
For $n \in \N$ there is the maximal exceptional sequence
$$
(0,0,0)~(1,n,n)~(1,n,n+1)~(1,n+1,0)~(2,1,0)~(2,n,1)~(n,n+1,1)~(n+1,n+1,1)
$$
which stretches $n+1$ steps in every direction.
That is, in the language of Subsection~(\ref{widthFES}),
$\width_1(\cl)=\width_2(\cl)=\width_3(\cl)=n+2$.
However, compare with Proposition~\ref{prop-bounding}.
\end{example}

\begin{example}
\label{ex-noLex}
Finally, we would like to point out another aspect where the 3-dimensional situation
is worse compared to the one from Subsection~\ref{squareCase}.
Putting the right-hand Example from Figure~\ref{fig:planeSeq}, i.e.\
$\cl=[(0,0),\,(7,1),\,(8,1),\,(1,2)]$, and its transpose
$\cl'=[(0,0),\,(1,7),\,(1,8),\,(2,1)]$ into adjacent layers,
the resulting 3-dimensional exceptional sequence does not allow any
lexicographic, exceptional order.
\end{example}

\section{Mapping to the cube}\label{MapCube}

\subsection{The labeling}\label{labeling}
Assume that $\cl\subset\Z^r$ is a maximal exceptional sequence, i.e.,
in particular, it consists of $2^r$ points. Then, we consider the map
$$
\Phi:\cl \hookrightarrow \Z^r \surj (\Z/2\Z)^r.
$$

\begin{proposition}
\label{prop-cube}
The map \kbox{$\Phi$ is bijective}.
\end{proposition}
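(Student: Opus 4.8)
The plan is to derive bijectivity of $\Phi$ from injectivity together with the cardinality equality $|\cl| = 2^r = |(\Z/2\Z)^r|$, so that it suffices to check injectivity. First I would fix two distinct elements $\cl^i$ and $\cl^j$ of $\cl$ with $\Phi(\cl^i) = \Phi(\cl^j)$ and, after possibly interchanging them, assume $i < j$.

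Since $\cl$ is $(1,\ldots,1)$-exceptional, the exceptionality condition of Definition~\ref{def-fesProdPd}(1), in the specialized form recalled in Subsection~(\ref{Pad}), produces an index $\nu \in \{1,\ldots,r\}$ with $(\cl^j - \cl^i)_\nu = 1$. On the other hand, the equality $\Phi(\cl^i) = \Phi(\cl^j)$ says precisely that $\cl^j - \cl^i \in (2\Z)^r$, so every coordinate of $\cl^j - \cl^i$ is even, contradicting $(\cl^j - \cl^i)_\nu = 1$. Hence no such coincidence can occur, $\Phi$ is injective, and therefore bijective.

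There is essentially no obstacle here: the argument is just the parity observation that a coordinate difference equal to $1$ is odd, combined with a pigeonhole count. The only point needing a little care is that exceptionality is an \emph{ordered} condition, so one must select the pair $(i,j)$ in the correct order before invoking it; since the hypothesis $\Phi(\cl^i)=\Phi(\cl^j)$ is symmetric, this causes no trouble. It is worth noting that the same proof applies verbatim to a maximal exceptional sequence on $\PP^{d_1}\times\cdots\times\PP^{d_r}$, with $(\Z/2\Z)^r$ replaced by $\prod_{\nu} \Z/(d_\nu+1)\Z$, because $(\cl^j-\cl^i)_\nu \in \{1,\ldots,d_\nu\}$ is never divisible by $d_\nu+1$; only the cube case is needed in what follows.
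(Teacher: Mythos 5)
Your argument is correct and coincides with the paper's own proof: both reduce bijectivity to injectivity via the cardinality count $|\cl|=2^r$ and then derive a contradiction between the exceptionality condition $(\cl^j-\cl^i)_\nu=1$ and the parity consequence $\cl^j-\cl^i\in 2\Z^r$ of $\Phi(\cl^i)=\Phi(\cl^j)$. The closing remark about the generalization to $\prod_\nu \Z/(d_\nu+1)\Z$ is a pleasant observation but does not change the approach.
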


\begin{proof}
It suffices to check that $\Phi$ is injective. If
$\cl^i,\cl^j\in\cl$ with $i<j$ but $\Phi(\cl^i)=\Phi(\cl^j)$, then
$\cl^j-\cl^i\in 2\cdot\Z^r$, i.e.\
$(\cl^j-\cl^i)_\nu\in 2\Z$ for all coordinates $\nu=1,\ldots,r$.
This contradicts the condition stated in
Subsection~(\ref{adaptSpecial}) asking for
$(\cl^j-\cl^i)_\nu=1$.
\end{proof}

Since $\cl$ comes with a total ordering, it induces a labeling
of the vertices of the $r$-cube, i.e.\ of the elements of $(\Z/2\Z)^r$,
with the numbers $0,\ldots,2^r-1$.
For example, if the lower left vertex of the cube from
Figure~\ref{fig:cubeTetrahedra} is taken as the origin, then the
\fes\ leads to the following ordering of the elements of $(\Z/2\Z)^3$:
$$
\Phi= (0,1,1),\;(1,0,1),\;(1,1,0),\;(1,1,1),\;
      (0,0,0),\;(1,0,0),\;(0,1,0),\;(0,0,1).
$$

\begin{remark}
The map $\Phi$ stays bijective if we replace the \mes\ $\cl$ by a subset $\{\cl^0, \ldots, \cl^{2^r-1}\}$ of $\Z^r$,
having the less restrictive property that for each $i \not= j$ there is an index $\nu \in \{1, \ldots, r\}$ satisfying $|\cl^j_\nu - \cl^i_\nu| = 1$.\\
The subset of $\Z^2$ in Figure~\ref{fig:notOrderedSet} does not infect any other points of $\Z^2$, in particular it is not full.
This shows that the requirement that $\cl$ is {\em ordered} is essential.
\end{remark}

\begin{figure}[ht]
$
\begin{tikzpicture}[scale=0.4]
\draw[color=oliwkowy!40] (-0.3,-0.3) grid (3.3,3.3);
\fill[thick, color=black]
 (0,2) circle (5pt)
 (1,0) circle (5pt)
 (2,3) circle (5pt)
 (3,1) circle (5pt);
\end{tikzpicture}
$
\vspace*{-1ex}
\caption{A maximal subset (not a sequence) of $\Z^2$ that is not full}
\label{fig:notOrderedSet}
\end{figure}
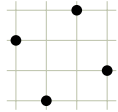

\begin{remark}
Let $\cl$ be an $r$-dimensional \mes.
Then the sequence $\tilde{\cl}$ defined by 
$\tilde{\cl}^i_k := -\cl^{2^r-1-i}_k$ is a \mes, too.
Moreover, $\tilde{\cl}$ is full if and only if $\cl$ is full.
The labeling of the vertices of the $r$-cube induced by $\tilde{\cl}$ is the reverse of the one induced by $\cl$.
\end{remark}
\subsection{Symmetries}\label{symmetries}
We have symmetries governed by the group
$(\Z/2\Z)^r$: A shift of $\cl\subseteq\Z^r$ by the unit vector
$e_\nu\in\Z^r$ for some $\nu=1,\ldots,r$
does neither change the status of exceptionality, maximality, or fullness,
i.e.\ infectivity. Within the cube $(\Z/2\Z)^r$, this operation is still given
by the map $\theta_\nu:x\mapsto x+e_\nu$, but it simply means
the reflection along the $\nu$-th hyperplane.
\\[1ex]
We may get rid of these symmetries by normalizing the labeling via
putting the first element $\cl^0$ of $\cl$ into
the lower left corner, i.e.\ asking for $\Phi(\cl^0)=0\in(\Z/2\Z)^r$
or even $\cl^0=0$.
For instance,
the two plane exceptional sequences from Figure~\ref{fig:planeSeq} lead
\begin{figure}[ht]
$
\begin{tikzpicture}[scale=0.5]
\fill[thick, color=black]
 (0,0) -- (2,0) (0,1) -- (2,1) (0,2) -- (2,2)
 (0,0) -- (0,2) (1,0) -- (1,2) (2,0) -- (2,2);
\draw[thin, color=black]
 (0.5,0.5) node{$0$} (1.5,0.5) node{$1$}
 (0.5,1.5) node{$3$} (1.5,1.5) node{$2$};
\end{tikzpicture}
\raisebox{2ex}{\mbox{ meaning }}
\begin{tikzpicture}[scale=0.5]
\fill[thick, color=black]
 (0,0) -- (2,0) (0,1) -- (2,1) (0,2) -- (2,2)
 (0,0) -- (0,2) (1,0) -- (1,2) (2,0) -- (2,2);
\draw[thin, color=black]
 (0.5,0.5) node{$\cl^0$} (1.5,0.5) node{$\cl^1$}
 (0.5,1.5) node{$\cl^3$} (1.5,1.5) node{$\cl^2$};
\end{tikzpicture}
\hspace{2em}
\raisebox{2ex}{\mbox{ and }}
\hspace{2em}
\begin{tikzpicture}[scale=0.5]
\fill[thick, color=black]
 (0,0) -- (2,0) (0,1) -- (2,1) (0,2) -- (2,2)
  (0,0) -- (0,2) (1,0) -- (1,2) (2,0) -- (2,2);
\draw[thin, color=black]
 (0.5,0.5) node{$0$} (1.5,0.5) node{$3$}
 (0.5,1.5) node{$2$} (1.5,1.5) node{$1$};
\end{tikzpicture}
\raisebox{2ex}{\mbox{ meaning }}
\begin{tikzpicture}[scale=0.5]
\fill[thick, color=black]
 (0,0) -- (2,0) (0,1) -- (2,1) (0,2) -- (2,2)
 (0,0) -- (0,2) (1,0) -- (1,2) (2,0) -- (2,2);
\draw[thin, color=black]
 (0.5,0.5) node{$\cl^0$} (1.5,0.5) node{$\cl^3$}
 (0.5,1.5) node{$\cl^2$} (1.5,1.5) node{$\cl^1$};
\end{tikzpicture}
$
\caption{Labelings for the \fes s of Figure~\ref{fig:planeSeq}}
\label{fig:labelPlaneSeq}
\end{figure}
to the normalized labelings presented in Figure~\ref{fig:labelPlaneSeq}.
Doing the same for the \fes\ from Figure~\ref{fig:cubeTetrahedra}
yields
the labeled cube encoded as two squares (lower/upper layer) in
Figure~\ref{fig:labelCubeTetrahedra}.
\begin{figure}[ht]
$
\begin{tikzpicture}[scale=0.5]
\fill[thick, color=black]
 (0,0) -- (2,0) (0,1) -- (2,1) (0,2) -- (2,2)
 (0,0) -- (0,2) (1,0) -- (1,2) (2,0) -- (2,2);
\draw[thin, color=black]
 (0.5,0.5) node{$4$} (1.5,0.5) node{$5$}
 (0.5,1.5) node{$6$} (1.5,1.5) node{$2$};
\end{tikzpicture}
\hspace{0.5em}\raisebox{2ex}{\mbox{ and }}\hspace{0.5em}
\begin{tikzpicture}[scale=0.5]
\fill[thick, color=black]
 (0,0) -- (2,0) (0,1) -- (2,1) (0,2) -- (2,2)
 (0,0) -- (0,2) (1,0) -- (1,2) (2,0) -- (2,2);
\draw[thin, color=black]
 (0.5,0.5) node{$7$} (1.5,0.5) node{$1$}
 (0.5,1.5) node{$0$} (1.5,1.5) node{$3$};
\end{tikzpicture}
\hspace{2em}\raisebox{2ex}{\mbox{ becomes normalized to }}\hspace{2em}
\begin{tikzpicture}[scale=0.5]
\fill[thick, color=black]
 (0,0) -- (2,0) (0,1) -- (2,1) (0,2) -- (2,2)
 (0,0) -- (0,2) (1,0) -- (1,2) (2,0) -- (2,2);
\draw[thin, color=black]
 (0.5,0.5) node{$0$} (1.5,0.5) node{$3$}
 (0.5,1.5) node{$7$} (1.5,1.5) node{$1$};
\end{tikzpicture}
\hspace{0.5em}\raisebox{2ex}{\mbox{ and }}\hspace{0.5em}
\begin{tikzpicture}[scale=0.5]
\fill[thick, color=black]
 (0,0) -- (2,0) (0,1) -- (2,1) (0,2) -- (2,2)
 (0,0) -- (0,2) (1,0) -- (1,2) (2,0) -- (2,2);
\draw[thin, color=black]
 (0.5,0.5) node{$6$} (1.5,0.5) node{$2$}
 (0.5,1.5) node{$4$} (1.5,1.5) node{$5$};
\end{tikzpicture}
$
\caption{Labelings for the \fes\ of Figure~\ref{fig:cubeTetrahedra}}
\label{fig:labelCubeTetrahedra}
\end{figure}
\\[1ex]
Finally, we should remark that, via permuting the coordinates,
the symmetric group $S_r$ acts, too. This action is visible on both the
exceptional subsets of $\Z^r$ as well as on the labelings of $(\Z/2\Z)^r$.
Altogether we have $(2^r)!/(2^r\cdot r!)=(2^r-1)!/r!$
(that is $840$ for $r=3$) cosets of possible labelings of
$(\Z/2\Z)^r$. It might be interesting to know which or how many of them
can be induced from \fes s.

\section{The configuration of layers}\label{confLayers}

Let $\cl$ be an $r$-dimensional exceptional sequence.
In the present section we choose and fix some $\nu\in\{1,\ldots,r\}$
and denote by $\pi_\nu:\Z^r\surj\Z$ the associated coordinate.
Moreover, we denote by $q_\nu:\Z^r\surj\Z^{r-1}$ the projection
forgetting the chosen $\nu$-th component.
Accordingly, we obtain a decomposition of $\cl$ into layers
$L_c=\cl\cap \pi_\nu^{-1}(c)$.
When it simplifies the notation, we may always assume, w.l.o.g., that $\nu=r$
is the last coordinate.

\subsection{Reduction to $(r-1)$-dimensional sequences}\label{reductR}
We start with the key lemma for reducing the dimension from $r$ to $r-1$.
At this point, no maximality of $\cl$ is assumed.

\begin{lemma}
\label{lem-combine}
Let $C\subset\Z$ be such that no two elements of $C$ are
adjacent to each other, i.e.\ $1\notin C-C$.
Then, the map $q_\nu$ is injective when restricted to
$\bigcup_{c\in C}L_c$. Moreover, its image
\kbox{$q_\nu(\bigcup_{c\in C}L_c)\subset\Z^{r-1}$}
with the inherited total order is an
$(r-1)$-dimensional exceptional sequence.
\end{lemma}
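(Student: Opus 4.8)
The plan is to extract everything from a single observation: the hypothesis $1 \notin C - C$ is precisely what forbids the $\nu$-th coordinate from ever being the direction that witnesses exceptionality for a pair of points lying in $S := \bigcup_{c \in C} L_c$.

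So the first step is the \emph{key claim}: whenever $\cl^i, \cl^j \in S$ with $i < j$, the index $\nu(i,j) \in \{1,\dots,r\}$ provided by exceptionality of $\cl$ is different from $\nu$. The reason is immediate — if $\nu(i,j) = \nu$ then $(\cl^j - \cl^i)_\nu = 1$, so if $\cl^i \in L_c$ then $\cl^j \in L_{c+1}$; since both layers contribute to $S$, both $c$ and $c+1$ belong to $C$, forcing $1 \in C - C$, a contradiction.

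From the key claim I would first deduce injectivity of $q_\nu$ on $S$: if $q_\nu(\cl^i) = q_\nu(\cl^j)$ with $i \ne j$ (say $i<j$), then $\cl^j - \cl^i$ is supported on the $\nu$-th coordinate alone, so the only coordinate that can equal $1$ is the $\nu$-th one; by exceptionality that coordinate \emph{does} equal $1$, i.e.\ $\nu(i,j) = \nu$ — contradicting the key claim. Once injectivity is in hand, the restriction of the order of $\cl$ transports along the bijection $q_\nu|_S$ to a well-defined total order on $q_\nu\big(\bigcup_{c\in C}L_c\big) \subset \Z^{r-1}$.

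The last step is to verify the exceptionality condition for this ordered set. Given two of its elements in order, their preimages are some $\cl^i, \cl^j \in S$ with $i < j$; by the key claim $\mu := \nu(i,j) \ne \nu$, so $\mu$ is one of the $r-1$ coordinates retained by $q_\nu$, and $(\cl^j - \cl^i)_\mu = 1$ descends to a coordinate of the corresponding difference in $\Z^{r-1}$ being equal to $1$. That is exactly the defining condition of a $(1,\dots,1)$-exceptional sequence in dimension $r-1$, so we are done. I do not expect a genuine obstacle; the only points requiring care are the logical order — injectivity must be settled before the ``inherited order'' on the image even makes sense — and the fact that the hypothesis $1 \notin C - C$ enters in exactly the same way in both halves of the argument.
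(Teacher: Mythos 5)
Your proof is correct and follows essentially the same route as the paper's: both arguments hinge on the observation that $1\notin C-C$ rules out $\nu$ as the witnessing index of exceptionality, so the witness $\mu\neq\nu$ survives the projection $q_\nu$ and simultaneously yields injectivity and the exceptionality of the image. The only (harmless) cosmetic difference is that you derive injectivity by a separate contradiction argument, whereas the paper reads it off directly from the existence of a coordinate $\mu\neq\nu$ where the difference equals $1$.
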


\begin{proof}
Consider $\cl^i,\cl^j\in\bigcup_{c\in C}L_c$ for some
$i<j$. Since the $\nu$-th coordinate of these elements belong to $C$,
we know for sure that
$(\cl^j-\cl^i)_\nu=\cl^j_\nu-\cl^i_\nu \neq1$. Thus,
by Subsection~(\ref{adaptSpecial}), there
has to be another index $\mu\neq\nu$ such that
$\cl^j_\mu-\cl^i_\mu =1$,
i.e.\ that $q_\nu(\cl^j)_\mu- q_\nu(\cl^i)_\mu=1$ (if $\mu < \nu$) or
$q_\nu(\cl^j)_{\mu-1}- q_\nu(\cl^i)_{\mu-1}=1$ ($\mu > \nu$).
This implies both that
$q_\nu(\cl^i)\neq q_\nu(\cl^j)$ and that
these two elements satisfy the exceptionality condition in $\Z^{r-1}$.
\end{proof}

\begin{corollary}
\label{cor-oddEven}
Assume that $A,B\subset\Z$ are disjoint subsets fulfilling the assumptions of
$C$ from Lemma~\ref{lem-combine}. Then, if $\cl\subset\Z^r$ is a \mes\ being
contained in $\bigcup_{c\in A\cup B}L_c$
{\rm (for instance, if $A\cup B=\Z$)}, both images
$\ko{\cl}_A:=q_\nu(\bigcup_{c\in A}L_c)$ and
$\ko{\cl}_B:=q_\nu(\bigcup_{c\in B}L_c)$ are \mes s in $\Z^{r-1}$.
\end{corollary}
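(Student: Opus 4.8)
The plan is to reduce Corollary~\ref{cor-oddEven} to two applications of Lemma~\ref{lem-combine}, the only subtlety being the bookkeeping about cardinalities. First I would apply Lemma~\ref{lem-combine} to the set $C=A$: since $A$ satisfies the no-adjacency hypothesis ($1\notin A-A$), the lemma gives that $q_\nu$ is injective on $\bigcup_{c\in A}L_c$ and that $\ko{\cl}_A=q_\nu(\bigcup_{c\in A}L_c)$, with the inherited total order, is an $(r-1)$-dimensional exceptional sequence. Symmetrically, applying the lemma to $C=B$ yields that $\ko{\cl}_B$ is an $(r-1)$-dimensional exceptional sequence. What remains is to check that each of these has the \emph{maximal} cardinality $2^{r-1}$.

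For the cardinality count I would argue as follows. Because $A$ and $B$ are disjoint and $\cl\subseteq\bigcup_{c\in A\cup B}L_c$, we have a disjoint decomposition $\cl=\big(\bigcup_{c\in A}L_c\big)\sqcup\big(\bigcup_{c\in B}L_c\big)$, hence $|\ko{\cl}_A|+|\ko{\cl}_B|=|\cl|=2^r$ by the injectivity of $q_\nu$ on each part. So it suffices to bound each $|\ko{\cl}_B|$ (equivalently $|\ko{\cl}_A|$) from above by $2^{r-1}$; the two bounds together with the sum $2^r$ then force equality. For the upper bound I would use Proposition~\ref{prop-cube}: more precisely, consider the composite $\cl\hookrightarrow\Z^r\surj(\Z/2\Z)^r$, which is a bijection onto $(\Z/2\Z)^r$. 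Since every element of $\bigcup_{c\in A}L_c$ has $\nu$-th coordinate in $A$ and every element of $\bigcup_{c\in B}L_c$ has $\nu$-th coordinate in $B$, and $A\cap B=\emptyset$, the two subsets $\bigcup_{c\in A}L_c$ and $\bigcup_{c\in B}L_c$ map into \emph{disjoint} subsets of $(\Z/2\Z)^r$ according to the parity class of their $\nu$-th coordinate; but more usefully, within a single class the $\nu$-th coordinate mod $2$ is constant, so $q_\nu$ followed by reduction mod $2$ in the remaining $r-1$ coordinates identifies $\ko{\cl}_A$ injectively (again by exceptionality, exactly as in Proposition~\ref{prop-cube} applied in dimension $r-1$ to the exceptional sequence $\ko{\cl}_A$) with a subset of $(\Z/2\Z)^{r-1}$, hence $|\ko{\cl}_A|\le 2^{r-1}$, and likewise $|\ko{\cl}_B|\le 2^{r-1}$.

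Combining: $2^r=|\ko{\cl}_A|+|\ko{\cl}_B|\le 2^{r-1}+2^{r-1}=2^r$, so both inequalities are equalities, $|\ko{\cl}_A|=|\ko{\cl}_B|=2^{r-1}$. Together with the fact, from the two applications of Lemma~\ref{lem-combine}, that $\ko{\cl}_A$ and $\ko{\cl}_B$ are $(r-1)$-dimensional exceptional sequences, this shows they are \mes s in $\Z^{r-1}$, as claimed. The parenthetical remark that $A\cup B=\Z$ is a legitimate instance is immediate, since then $\cl\subseteq\Z^r=\bigcup_{c\in\Z}L_c=\bigcup_{c\in A\cup B}L_c$ automatically.

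I expect the only genuine obstacle to be the cardinality bound $|\ko{\cl}_A|\le 2^{r-1}$: Lemma~\ref{lem-combine} alone gives exceptionality but says nothing about size, so one really does need to invoke an analogue of Proposition~\ref{prop-cube} in dimension $r-1$ — which is available precisely because $\ko{\cl}_A$ is already known to be exceptional. Everything else is routine bookkeeping about the disjoint decomposition of $\cl$ into layers and the additivity of cardinalities under the injection $q_\nu$.
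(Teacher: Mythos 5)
Your proof is correct and follows essentially the same route as the paper: Lemma~\ref{lem-combine} gives exceptionality of both images, the upper bound $|\ko{\cl}_A|,|\ko{\cl}_B|\le 2^{r-1}$ comes from the mod-$2$ injectivity argument of Proposition~\ref{prop-cube} applied to an exceptional sequence in $\Z^{r-1}$ (a step the paper leaves implicit in the phrase ``implying that''), and the count $|\ko{\cl}_A|+|\ko{\cl}_B|=2^r$ forces equality. The only blemishes are the discarded side remarks that the two parts map to disjoint parity classes of $(\Z/2\Z)^r$ and that the $\nu$-th coordinate is constant mod~$2$ on $\bigcup_{c\in A}L_c$ --- both can fail (e.g.\ $A=\{0,3\}$) --- but neither is used in the argument you actually run.
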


\begin{proof}
On the one hand, it follows from Lemma~\ref{lem-combine} that
$\ko{\cl}_A$ and $\ko{\cl}_B$ are exceptional in $\Z^{r-1}$,
implying that $|\ko{\cl}_A|,|\ko{\cl}_B|\leq 2^{r-1}$.
On the other, we do also know that
$$
|\ko{\cl}_A|+|\ko{\cl}_B|=|\cl|=2^r.
$$
Hence, $|\ko{\cl}_A|=|\ko{\cl}_B|=2^{r-1}$, i.e.\
both $\ko{\cl}_A$ and $\ko{\cl}_B$ are
maximal exceptional.
\end{proof}

The main example for Corollary~\ref{cor-oddEven} is $A=2\Z$ and $B=2\Z+1$.
Thus, we obtain two $(r-1)$-dimensional \mes s
$\ko{\cl}_{\mbox{\tiny even}}$ and $\ko{\cl}_{\mbox{\tiny odd}}$
arising as the projections from the even and odd layers of $\cl$,
respectively.
Via the labeling map $\Phi$ established in
Subsection~(\ref{labeling}), the sequences $\ko{\cl}_{\mbox{\tiny even}}$
and $\ko{\cl}_{\mbox{\tiny odd}}$ correspond to facets, i.e.\ to
$(r-1)$-dimensional slices of the cube $(\Z/2\Z)^r$.

Note that a kind of an opposite implication is also true:
If $\cl_A$ and $\cl_B$ are two $(r-1)$-dimensional maximal or even full
exceptional sequences, then we obtain by
$$
\cl\;:=\;\big(\cl_A\times\{0\}\big) \;\disjcup\; \big(\cl_B\times\{1\}\big)
$$
an $r$-dimensional one of the same quality. Moreover, proceeding
with $\cl$, we recover
$\cl_A=\ko{\cl}_{\mbox{\tiny even}}$ and
$\cl_B=\ko{\cl}_{\mbox{\tiny odd}}$ we have started with.

\subsection{Thin sequences and maximal layers}\label{maxLayers}
Since we have still fixed a coordinate $\nu\in\{1,\ldots,r\}$,
we will call $\width(\cl):=\width_\nu(\cl)$ from
Subsection~(\ref{widthFES}) simply the width of $\cl$.
Then, for example, the \mes\ $\cl$ just built
from the lower-dimensional $\cl_A, \cl_B$
at the end of Subsection~(\ref{reductR})
has width $2$. And both layers are maximal, meaning that they
contain the maximal number of $2^{r-1}$ points.

\begin{proposition}
\label{prop-thinthree}
Let $\cl$ be a \mes\ in $\Z^r$.
Then,
its width is bounded by \kbox{$\width(\cl)\leq 3$}
if and only if $\cl$ contains a \kbox{maximal layer}.
\\[0.0ex]
Moreover, if this is the case, then $\cl$ is full, i.e.~contaminating the whole lattice $\Z^r$
-- provided that we already know that maximality
\kbox{implies fullness} in dimension $r-1$.
\end{proposition}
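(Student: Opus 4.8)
\emph{Overview.} The plan is to prove the equivalence and the fullness assertion in one stroke, using only Lemma~\ref{lem-combine}, Corollary~\ref{cor-oddEven}, the bound --- already invoked in the proof of Corollary~\ref{cor-oddEven} --- that an exceptional sequence in $\Z^{r-1}$ has at most $2^{r-1}$ elements, and, for the ``moreover'' part, the assumed inductive fact that every \mes\ in $\Z^{r-1}$ is full. As permitted, fix $\nu=r$ and write points of $\Z^r$ as $(\bar x,t)$ with $\bar x\in\Z^{r-1}$, $t\in\Z$, so that $q_r(\bar x,t)=\bar x$ and the layer $L_c=\cl\cap\pi_r^{-1}(c)$ is identified with $q_r(L_c)\subseteq\Z^{r-1}$ sitting at height $c$. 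Recall from Lemma~\ref{lem-combine} that on a union of layers whose heights are pairwise non-adjacent, $q_r$ is injective and (with the inherited order) its image is an $(r-1)$-dimensional exceptional sequence.

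\emph{The equivalence.} Assume first $\width(\cl)\le 3$. A single layer injects into $\Z^{r-1}$ and so carries at most $2^{r-1}<2^r$ points; hence $\width(\cl)=1$ is impossible. If $\width(\cl)=2$, the two occupied layers each have at most $2^{r-1}$ points and together $2^r$, so both are maximal. If $\width(\cl)=3$, with all occupied layers among $L_c,L_{c+1},L_{c+2}$, then Lemma~\ref{lem-combine} on $C=\{c,c+2\}$ gives $|L_c|+|L_{c+2}|\le 2^{r-1}$, whence $|L_{c+1}|=2^r-(|L_c|+|L_{c+2}|)\ge 2^{r-1}$ and $L_{c+1}$ is maximal. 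Conversely, let $L_c$ be a maximal layer. Applying Corollary~\ref{cor-oddEven} to the parity classes $A=c+2\Z$ and $B=(c+1)+2\Z$ makes $\ko{\cl}_A=q_r(\bigcup_{a\in A}L_a)$ a \mes\ in $\Z^{r-1}$ of size $2^{r-1}=|L_c|$; since $q_r$ is injective on $\bigcup_{a\in A}L_a$, that union already equals $L_c$, so every occupied layer other than $L_c$ has the parity opposite to $c$. Moreover, if some occupied $L_{c'}$ had $|c'-c|\ge 2$, Lemma~\ref{lem-combine} on $\{c,c'\}$ would produce an exceptional sequence in $\Z^{r-1}$ of size $|L_c|+|L_{c'}|>2^{r-1}$, which is impossible. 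Hence every occupied layer lies in $\{c-1,c,c+1\}$ and $\width(\cl)\le 3$; translating by a multiple of $e_r$, we may assume the occupied layers lie in $\{-1,0,1\}$ with $L_0$ maximal.

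\emph{Fullness.} In this normal form, $q_r(L_0)$ is a \mes\ in $\Z^{r-1}$, hence full by the inductive hypothesis; since contamination carried out inside the hyperplane $[x_r=0]$ uses only the $r-1$ directions $\ne e_r$ and therefore coincides with contamination in $\Z^{r-1}$, the whole hyperplane $[x_r=0]$ gets infected. By Lemma~\ref{lem-combine} on $\{-1,1\}$, the projection $q_r(L_{-1}\cup L_1)$ is an exceptional sequence in $\Z^{r-1}$ of size $|L_{-1}|+|L_1|=2^r-2^{r-1}=2^{r-1}$, hence a \mes, hence full. Now lift $L_{-1}$ and $L_1$ into the hyperplane $[x_r=1]$: for each $\bar r\in q_r(L_{-1})$ the points $(\bar r,-1)\in\cl$ and $(\bar r,0)$ are adjacent in the $r$-th direction and both infected, so the whole line $\{(\bar r,t):t\in\Z\}$ becomes infected, in particular $(\bar r,1)$; the points of $L_1$ already lie in $[x_r=1]$. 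Thus $[x_r=1]$, identified with $\Z^{r-1}$ via $q_r$, now contains the full set $q_r(L_{-1}\cup L_1)$ and so becomes entirely infected. Finally, the adjacent hyperplanes $[x_r=0]$ and $[x_r=1]$ are both completely infected, so every line parallel to $e_r$ carries two adjacent infected points and all of $\Z^r$ is contaminated.

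\emph{The main obstacle.} The genuinely delicate case is $\width(\cl)=3$: once the central layer $L_0$ has saturated its hyperplane, the remaining seeds $L_{-1}$ and $L_1$ sit in the two \emph{non-adjacent} hyperplanes $[x_r=-1]$ and $[x_r=1]$, so the fullness of their joint projection in $\Z^{r-1}$ cannot be transferred directly --- no single $e_r$-line joins a point of $L_{-1}$ to a point of $L_1$. The device that rescues the reduction is to bounce each of $L_{-1}$ and $L_1$ off the already-saturated middle hyperplane by one step in direction $e_r$, gathering both families into the single hyperplane $[x_r=1]$, where their combined fullness can at last be used.
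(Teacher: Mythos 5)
Your proof is correct and follows essentially the same route as the paper: the equivalence rests on Lemma~\ref{lem-combine}/Corollary~\ref{cor-oddEven} exactly as in the text, and the fullness argument is the same two-stage contamination --- saturate the middle hyperplane via the inductive hypothesis, use $e_r$-lines through the seeds of the outer layers, and apply the inductive hypothesis once more to $q_r(L_{-1}\cup L_1)$. The only cosmetic difference is that you finish by producing two adjacent fully infected hyperplanes, whereas the paper infects the whole cylinder $\ko{\cl}_A\times\Z$ and then contaminates every horizontal layer simultaneously.
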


\begin{proof}
Assume that $\width(\cl)\leq 3$ and
denote by $A_0,B_1,A_2$ the three layers of $\cl$.
Then with $A=2\Z$ and $B=2\Z+1$ as in Subsection~(\ref{reductR}),
the layer $B_1$ equals the $(r-1)$-dimensional \mes\ $\ko{\cl}_B$ from
Corollary~\ref{cor-oddEven}.
\\[0.5ex]
Conversely, if $L_c$ is a full layer, then
any $L_d\neq\emptyset$ with $|d-c|\geq 2$ will yield a contradiction
via Lemma~\ref{lem-combine}: Just take $C:=\{c,d\}$.
\\[1.0ex]
Now, for the second part, let us assume that we have a maximal layer
$B_1$ sitting in between $A_0$ and $A_2$.
In particular, by the induction hypothesis,
all points of the associated hyperplane $[x_\nu=1]\subset\Z^r$ containing
$B_1$ will be contaminated.
\\[0.5ex]
In particular, this means that for each
$(a,0)\in A_0$ or $(a,2)\in A_2$, the points $(a,1)$ are contaminated,
In the second round, these pairs will infect the whole vertical lines
$a\times\Z$.
This, however, means that $\ko{\cl}_A\times\Z$ becomes contaminated.
\\[0.5ex]
Finally, as a third round,
we proceed with the contamination procedure for
$\ko{\cl}_A\subseteq\Z^{r-1}$ in each layer
$\Z^{r-1}\times\{c\}$ with $c\in\Z$ simultaneously.
\end{proof}

Since we have seen in Subsection~(\ref{squareCase}) that 2-dimensional
\mes s are always full, the previous proposition implies the same
fact for 3-dimensional \mes s of width (in {\em some} direction) at most~3.

\subsection{Heavy layers stick together}\label{largeStick}
Another immediate consequence of Lemma~\ref{lem-combine} is the
observation that heavy layers $L_c$, i.e.\ those having a large size
$\ell_c:=|L_c|$ will always be close to each other:

\begin{proposition}
\label{prop-largeStick}
Let $\cl\subset \Z^r$ be an $r$-dimensional exceptional sequence.
Then, any two different layers
\kbox{$L_a$ and $L_b$ with $\ell_a+\ell_b> 2^{r-1}$
have to be neighbors}, i.e.\ they satisfy $|b-a|=1$.
\end{proposition}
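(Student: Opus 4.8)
The plan is a one-step argument by contradiction, built directly on Lemma~\ref{lem-combine}. Suppose that $L_a$ and $L_b$ are two distinct layers of $\cl$ with $|b-a|\geq 2$. Then the two-element set $C:=\{a,b\}\subset\Z$ satisfies $1\notin C-C$, so Lemma~\ref{lem-combine} applies to it: the projection $q_\nu$ is injective on $\bigcup_{c\in C}L_c = L_a\cup L_b$, and the image $q_\nu(L_a\cup L_b)\subset\Z^{r-1}$, equipped with the inherited total order, is an $(r-1)$-dimensional exceptional sequence.

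Next I would invoke the cardinality bound for exceptional sequences of line bundles — the very one already used in the proof of Corollary~\ref{cor-oddEven} — namely that an exceptional sequence in $\Z^{r-1}$ has at most $2^{r-1}=\rank K_0\big((\PP^1)^{r-1}\big)$ elements. Since $L_a$ and $L_b$ are preimages of distinct points under $\pi_\nu$, they are disjoint, so injectivity of $q_\nu$ on their union gives
$$
\ell_a+\ell_b \;=\; |L_a|+|L_b| \;=\; |L_a\cup L_b| \;=\; \big|q_\nu(L_a\cup L_b)\big| \;\leq\; 2^{r-1}.
$$
This contradicts the hypothesis $\ell_a+\ell_b>2^{r-1}$, so we must have $|b-a|=1$.

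There is essentially no obstacle here: the proposition is an immediate corollary of Lemma~\ref{lem-combine}. The only points needing (minimal) care are that the two layers are genuinely disjoint — automatic, as they sit in different fibers of $\pi_\nu$ — and that one is allowed to quote the bound $|\,\cdot\,|\leq 2^{r-1}$ for an arbitrary, not necessarily maximal, $(r-1)$-dimensional exceptional sequence; but this is exactly how Corollary~\ref{cor-oddEven} was argued, so no extra input is required.
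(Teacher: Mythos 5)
Your argument is correct and is essentially identical to the paper's proof: both assume $L_a$ and $L_b$ are not neighbors, apply Lemma~\ref{lem-combine} with $C=\{a,b\}$ to see that $q_\nu$ embeds $L_a\cup L_b$ as an $(r-1)$-dimensional exceptional sequence, and conclude $\ell_a+\ell_b\leq 2^{r-1}$, contradicting the hypothesis. The cardinality bound you quote is justified exactly as in Corollary~\ref{cor-oddEven} (via the injectivity argument of Proposition~\ref{prop-cube}), so no gap remains.
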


\begin{proof}
Assume that $L_a$ and $L_b$ are not neighbors. Then,
by Lemma~\ref{lem-combine} it follows that
$q_\nu:(L_a\cup L_b)\hookrightarrow\Z^{r-1}$ is injective, and that its image
is an exceptional sequence of dimension $r-1$.
Hence, $\ell_a+\ell_b\leq 2^{r-1}$.
\end{proof}

\subsection{Lower bounds for the heaviest layers}\label{heavyExist}
Especially for $r=3$, the previous Proposition~\ref{prop-largeStick}
has quite strong implications. The reason is the following statement ensuring
the existence of a layer $L$ with $|L| \geq 3$ in at least {\em one direction}
$\nu\in\{1,\ldots,r\}$:

\begin{lemma}\label{layersWithThree}
Let $\cl\subset\Z^r$ be an $r$-dimensional \mes.
Then there is a $\nu\in\{1,\ldots,r\}$ such that at least one of
the associated layers $L$ satisfies $\ell=|L|\geq\frac{2^r-1}{r}$.
In particular, if $r=3$, then there
\kbox{has to be a layer with $\ell\geq 3$}.
\end{lemma}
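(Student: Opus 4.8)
The plan is to use a double-counting argument on the total size $|\cl| = 2^r$. For each direction $\nu \in \{1,\ldots,r\}$, the sequence $\cl$ decomposes into layers $L_c^{(\nu)}$, and I would consider what happens if, in \emph{every} direction $\nu$, all layers had size at most $k$ for some bound $k$. The natural thing is to bound how the layers can be ``packed'': since $\Phi \colon \cl \to (\Z/2\Z)^r$ is bijective by Proposition~\ref{prop-cube}, I can try to control the number of points of $\cl$ lying in any fixed residue class modulo~$2$ in the $\nu$-th coordinate. Concretely, the even layers project (via Corollary~\ref{cor-oddEven}, with $A=2\Z$) to an $(r-1)$-dimensional maximal exceptional sequence, hence carry exactly $2^{r-1}$ points, and likewise the odd layers carry $2^{r-1}$ points; but this splits $\cl$ only into two halves and does not by itself force a heavy individual layer.

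So the real input must be Proposition~\ref{prop-largeStick}: in a fixed direction, any two layers $L_a, L_b$ with $\ell_a + \ell_b > 2^{r-1}$ are neighbors. First I would argue by contradiction: suppose that for every $\nu$ and every layer $L$ in direction $\nu$ one has $\ell < \frac{2^r-1}{r}$, i.e.\ $\ell \le \lceil \frac{2^r-1}{r}\rceil - 1$. The key observation is then that, in each direction $\nu$, the layers with $\ell_c \ge 2^{r-2}$ (say) must all cluster: by Proposition~\ref{prop-largeStick} any two layers whose sizes sum to more than $2^{r-1}$ are adjacent, which severely restricts the ``support'' $\{c : L_c \neq \emptyset\}$ of the heavy part. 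I would aim to show that if no layer reaches size $\frac{2^r-1}{r}$, then in each of the $r$ directions the mass of $\cl$ is spread over at least some minimum number of layers, and then combine these $r$ spreading statements to overcount or undercount $2^r$.

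Here is the cleanest route I can see. Fix a direction $\nu$ and let $\ell^{(\nu)}_{\max}$ be the largest layer size in that direction. I claim that summing over the three (or more) heaviest layers is constrained: by Proposition~\ref{prop-largeStick}, if the two heaviest layers in direction $\nu$ are \emph{not} adjacent, then $\ell^{(\nu)}_{\max} + (\text{second largest}) \le 2^{r-1}$, so in particular if $\ell^{(\nu)}_{\max}$ is small then the second largest is correspondingly unconstrained only up to $2^{r-1}-\ell^{(\nu)}_{\max}$; whereas if they \emph{are} adjacent, I can pair them up as a ``thick slab'' and iterate. I would formalize this as: the layers of $\cl$ in direction $\nu$, grouped into maximal runs of consecutive nonempty layers, have the property that across two different runs the sizes are small (sum $\le 2^{r-1}$). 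Then the total $2^r$ forces either one run with large total mass — and within that run, some single layer must be comparatively large by pigeonhole on the run length — or else I get enough runs that summing the ``$\le 2^{r-1}$ pairwise'' bounds across all pairs of runs, together with the fact that the whole thing sums to $2^r$, yields a lower bound on $\ell^{(\nu)}_{\max}$. Taking the best direction and plugging $r=3$, where $\frac{2^r-1}{r} = \frac{7}{3}$, so $\ell \ge \lceil 7/3 \rceil = 3$, gives the stated corollary.

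The main obstacle I anticipate is making the ``runs of consecutive layers'' bookkeeping precise enough to get the clean averaged bound $\frac{2^r-1}{r}$ rather than something weaker. In particular, Proposition~\ref{prop-largeStick} only controls \emph{pairs} of layers summing above $2^{r-1}$, so if many layers each have size just below $2^{r-2}$ the pairwise hypothesis is never triggered and I get no adjacency information at all — yet such a configuration already has a layer of size close to $2^{r-2}$, which for $r=3$ is $2$, falling just short of the target $3$. So for general $r$ I would likely need to also invoke the Corollary~\ref{cor-oddEven} parity splitting (even layers and odd layers each summing to exactly $2^{r-1}$) \emph{simultaneously with} the adjacency constraint: if every layer has size $\le 2$ and $r=3$, the even layers total $4$ spread over non-adjacent $c$'s and the odd layers total $4$ similarly, and one checks by hand that any such arrangement with all $\ell_c \le 2$ forces, in at least one of the three coordinate directions, two equal-size-$2$ layers that are non-adjacent, contradicting nothing directly — so instead the finishing blow for $r=3$ is a short finite case check: assume all layers in all three directions have size $\le 2$, so $\cl$ sits in a $4\times4\times4$ box with exactly two points in every axis-parallel slab, and derive a contradiction with the exceptional-sequence ordering condition. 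I expect the paper does exactly this last step by a direct combinatorial argument on the at-most-$4$-wide configuration, and that is the part requiring the most care.
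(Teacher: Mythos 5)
There is a genuine gap here: your argument never actually reaches the bound $\frac{2^r-1}{r}$, and you say so yourself. The route through Proposition~\ref{prop-largeStick} and Corollary~\ref{cor-oddEven} only constrains pairs of layers whose sizes sum to more than $2^{r-1}$, so, as you note, a configuration in which every layer has size just below $2^{r-2}$ triggers none of your adjacency constraints; for $r=3$ that leaves you stuck at layer size $2$ rather than the required $3$. The patch you propose for this residual case is also not sound as stated: assuming all layers in all directions have size $\le 2$ does \emph{not} confine $\cl$ to a $4\times 4\times 4$ box with exactly two points per slab (layers of size $1$ and interior empty layers are still possible at that stage), and the ``short finite case check'' that is supposed to finish the proof is never carried out. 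So the proposal is a programme with an acknowledged hole at exactly the point where the lemma's content lives.

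The idea you are missing is much more elementary and does not use layer statistics at all: apply the exceptionality condition of Subsection~(\ref{adaptSpecial}) to the pairs $(\cl^0,\cl^j)$ for $j=1,\ldots,2^r-1$. For each such $j$ there is some $\nu$ with $\cl^j_\nu-\cl^0_\nu=1$, i.e.\ each of the $2^r-1$ elements of $\cl\setminus\{\cl^0\}$ lies in one of the $r$ specific layers $[x_\nu=\cl^0_\nu+1]$, $\nu=1,\ldots,r$. The pigeonhole principle then puts at least $\frac{2^r-1}{r}$ of them into a single such layer, which for $r=3$ gives $\lceil 7/3\rceil=3$. Note that this is where the \emph{ordering} of $\cl$ enters (every later element is compared against the first one); your double-counting framework never exploits the distinguished role of $\cl^0$ and therefore cannot see this.
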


\begin{proof}
In the ordered sequence $\cl$ all elements after the initial one,
i.e.~after $\cl^0$,
have to be in one of the following layers
$$
(\{\cl_1^0+1\}\times\Z\times\ldots\times\Z),
(\Z\times\{\cl_2^0+1\}\times\Z\times\ldots\times\Z),
\ldots,
(\Z\times\ldots\times\Z\times\{\cl_r^0+1\}).
$$
Since there are $2^r-1$ elements from $\cl\setminus\{\cl^0\}$
to be distributed to $r$ possible layers, the
result follows from the pigeon hole principle.
\end{proof}

Finally, let us focus on the case $r=3$.
While we already have treated the case of full layers, i.e.\
$\ell_c=4$, in Proposition~\ref{prop-thinthree},
we may assume that $\ell_c\leq 3$ for all
$c\in\Z$ and all directions $\nu\in\{1,2,3\}$.
But then, Lemma~\ref{layersWithThree} guarantees the existence of
at least one layer $L_c$ (in some direction $\nu$) satisfying $\ell_c=3$.
Moreover, by Proposition~\ref{prop-largeStick}, any further layer
(in the same direction $\nu$) containing more
than one point has to be adjacent to $L_c$.

\section{Block structure} \label{blockStruct}

As in Section~\ref{confLayers}, we start with an $r$-dimensional exceptional
sequence $\cl$ and fix a coordinate $\nu\in\{1,\ldots,r\}$.
Recall that $\pi_\nu:\Z^r\surj\Z$ and $q_\nu:\Z^r\surj\Z^{r-1}$
denote the associated projections.
According to this, the non-empty layers
$L_c=\cl\cap \pi_\nu^{-1}(c)$ of $\cl$, i.e.\ those with $\ell_c=|L_c|\geq 1$,
can be arranged in several blocks. The following definition makes this precise:

\begin{definition}
A finite subset $B\subset\Z$ of consecutive numbers is called a segment for
$\nu$ if $L_b\neq\emptyset$ for all $b\in B$ and if
$L_{\min(B)-1}=L_{\max(B)+1}=\emptyset$. The preimage
$$
\textstyle
L_B:=\cl\cap \pi_\nu^{-1}(B)=\bigcup_{b\in B} L_b
$$
is called the block associated to $B$.
The layers $L_{\min(B)}$ and $L_{\max(B)}$ are called the boundary
or outer layers of $B$.
\end{definition}

\subsection{Outer layers are lighter than their neighbors}
\label{outerInner}
The following proposition shows that the number of elements in a layer
decreases towards the boundaries of the corresponding block.

\begin{proposition}
\label{prop-outerInner}
Let $\cl$ be an $r$-dimensional \mes. Then, for any direction
$\nu\in\{1,\ldots,r\}$ and for each segment $B\subset\Z$,
\kbox{no outer layer} of $L_B$ contains \kbox{more points}
than its \kbox{neighboring ``inner''} layer. That is,
$$
\ell_{\min(B)}\leq \ell_{\min(B)+1}
\hspace{1em}\mbox{and}\hspace{1em}
\ell_{\max(B)}\leq \ell_{\max(B)-1}.
$$
In particular, the width of the block $\width(L_B)=\max(B)-\min(B)+1$
does always exceed one.
\end{proposition}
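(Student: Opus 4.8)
The plan is to prove the inequality $\ell_{\min(B)}\le\ell_{\min(B)+1}$ in detail; the one at $\max(B)$ follows by repeating the argument with the words ``left'' and ``right'' interchanged (equivalently, by passing to the order-reversed negative of $\cl$, which is again a \mes\ and swaps the two boundary layers of each segment). So fix a segment $B$, put $c:=\min(B)$, and take $\nu:=r$; then $L_c\ne\emptyset$ while $L_{c-1}=\emptyset$, and the goal is $\ell_c\le\ell_{c+1}$.

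The key idea is that the emptiness of $L_{c-1}$ lets one attach $L_c$ to the family of layers sitting at \emph{odd} distance from $c$ without ever producing two adjacent levels, so that Lemma~\ref{lem-combine} becomes applicable. Concretely I would apply that lemma to
$$
C\;:=\;\{c\}\ \cup\ \{\, d\in\Z \ :\ d-c\ \text{odd},\ d\notin\{c-1,\,c+1\} \,\}.
$$
If $d,d'\in C$ are distinct, then either both lie at odd distance from $c$ and hence $d-d'$ is even and nonzero, or one of them is $c$ and the other has $|d-c|\ge 3$; in all cases $d-d'\ne 1$, so $1\notin C-C$. Lemma~\ref{lem-combine} then tells us that $q_\nu$ is injective on $\bigcup_{d\in C}L_d$ and that its image is an $(r-1)$-dimensional exceptional sequence, which — being exceptional — has at most $2^{r-1}$ elements (cf.\ the proof of Proposition~\ref{prop-cube}, or of Corollary~\ref{cor-oddEven}). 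Therefore
$$
\ell_c\ +\ \sum_{\substack{d-c\ \text{odd}\\ d\notin\{c-1,\,c+1\}}}\ell_d\ \le\ 2^{r-1}.
$$

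Now I would feed in Corollary~\ref{cor-oddEven}, applied with $A$ the levels at even distance from $c$ and $B'$ those at odd distance (these two sets partition $\Z$ and each is free of adjacent pairs): it yields $\sum_{d-c\ \text{odd}}\ell_d = 2^{r-1}$. Since $L_{c-1}=\emptyset$, the sum in the last display equals $2^{r-1}-\ell_{c+1}$, and substituting gives $\ell_c + 2^{r-1}-\ell_{c+1}\le 2^{r-1}$, i.e.\ $\ell_c\le\ell_{c+1}$, as wanted. The final assertion $\width(L_B)>1$ is then automatic: a block of width one would be $B=\{c\}$, forcing $\ell_{c+1}=0$ and hence $\ell_c\le 0$, which is impossible since $L_c\ne\emptyset$.

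The one step I expect to require an idea, rather than routine bookkeeping, is the choice of $C$. A direct comparison of $L_c$ with the adjacent layer $L_{c+1}$ does not work, because $q_\nu$ need not be injective on $L_c\cup L_{c+1}$ (neighbouring levels interact through the $\nu$-th coordinate, which Lemma~\ref{lem-combine} cannot see); what makes the proof go through is precisely that $L_{c-1}$ is empty, so that $L_c$ can be rerouted into the opposite-parity antichain, after which Corollary~\ref{cor-oddEven} converts the resulting cardinality bound into the stated inequality at essentially no cost.
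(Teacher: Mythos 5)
Your argument is correct and is essentially the paper's own: the paper deduces the proposition from Lemma~\ref{lem-arithMean} ($\ell_b\le\ell_{b-1}+\ell_{b+1}$), whose proof uses exactly your set $C''=\{b\}\cup\big((b+1)+2\Z\big)\setminus\{b\pm1\}$ together with Lemma~\ref{lem-combine} and Corollary~\ref{cor-oddEven}, and then specializes to $b=\min(B)$ where $\ell_{b-1}=0$. The only difference is that you prove just the boundary case directly rather than the slightly more general lemma (which the paper reuses later in Subsection~\ref{threeRestOne}).
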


Note that blocks may have width $\width(L_B)=2$. Then, the neighbors
$L_{\min(B)+1}$ and $L_{\max(B)-1}$ are obviously not inner layers at all
-- nevertheless, they were called so in the previous proposition.
In this special case, it implies that
$\ell_{\min(B)}=\ell_{\max(B)}$.
Anyway, for all cases, this proposition is a direct consequence
(just set $b:=\min(B)$ or $b:=\max(B)$) of the following lemma:

\begin{lemma}
\label{lem-arithMean}
Let $\cl$ be an $r$-dimensional \mes. Then, for any direction
$\nu\in\{1,\ldots,r\}$ and any $b\in\Z$ we have
$\ell_b\leq \ell_{b-1}+\ell_{b+1}$.
\end{lemma}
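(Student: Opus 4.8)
The plan is to exhibit, for the fixed direction $\nu$ and each $b$, an injection $L_b\hookrightarrow L_{b-1}\sqcup L_{b+1}$. Since $L_{b-1}$ and $L_{b+1}$ are disjoint layers, this gives $\ell_b\le\ell_{b-1}+\ell_{b+1}$ at once. If $L_b=\emptyset$ there is nothing to prove, so assume $L_b\neq\emptyset$; the cases ``$b$ even'' and ``$b$ odd'' are symmetric, so suppose $b$ is even.

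First I would set up the even/odd reduction of Subsection~(\ref{reductR}). With $A=2\Z$ and $B=2\Z+1$, Corollary~\ref{cor-oddEven} tells us that $q_\nu$ is injective on $\bigcup_{c\text{ even}}L_c$ and on $\bigcup_{c\text{ odd}}L_c$, and that the two images $\ko{\cl}_{\mbox{\tiny even}}$ and $\ko{\cl}_{\mbox{\tiny odd}}$ are \emph{maximal} exceptional sequences in $\Z^{r-1}$. Applying Proposition~\ref{prop-cube} in dimension $r-1$, reduction modulo $2$ is a bijection on each of these. Composing, the two maps
\[
\psi_{\mbox{\tiny even}}\colon\textstyle\bigcup_{c\text{ even}}L_c\longrightarrow(\Z/2\Z)^{r-1},
\qquad
\psi_{\mbox{\tiny odd}}\colon\textstyle\bigcup_{c\text{ odd}}L_c\longrightarrow(\Z/2\Z)^{r-1},
\qquad z\longmapsto\overline{q_\nu(z)} ,
\]
are bijections.

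Next I would define the injection. For $x\in L_b$ put $y:=\psi_{\mbox{\tiny odd}}^{-1}\big(\psi_{\mbox{\tiny even}}(x)\big)$, the unique point of the odd layers with $q_\nu(y)\equiv q_\nu(x)\pmod 2$. The assignment $x\mapsto y$ is injective, since $x=\psi_{\mbox{\tiny even}}^{-1}\big(\psi_{\mbox{\tiny odd}}(y)\big)$ is recovered from $y$. It remains to show $y\in L_{b-1}\cup L_{b+1}$, and this is where exceptionality of $\cl$ does the work. Write $x=\cl^i$ and $y=\cl^j$. For every coordinate $\mu\neq\nu$ the entry $(\cl^j-\cl^i)_\mu$ is even, by the congruence defining $y$; hence the index singled out by the exceptionality condition for the pair $i,j$ cannot be any such $\mu$ and must be $\nu$ itself. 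If $i<j$ this forces $(\cl^j-\cl^i)_\nu=1$, so $y_\nu=b+1$; if $j<i$ it forces $(\cl^i-\cl^j)_\nu=1$, so $y_\nu=b-1$. In both cases $y\in L_{b-1}\cup L_{b+1}$, and we are done; the odd case is identical with ``even'' and ``odd'' swapped.

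The only step that really needs to be spotted — the ``main obstacle'' — is the choice of $y$: as soon as $q_\nu(y)\equiv q_\nu(x)\pmod 2$, \emph{all} coordinates except $\nu$ are unavailable to the exceptionality condition, so that condition is forced to act in the $\nu$-direction and hence pins $y$ into a layer adjacent to $L_b$. Everything else is bookkeeping with the bijections $\psi_{\mbox{\tiny even}},\psi_{\mbox{\tiny odd}}$, and it is precisely the maximality of $\cl$ (via Corollary~\ref{cor-oddEven} and Proposition~\ref{prop-cube}) that makes them bijective and thus guarantees that the target point $y$ exists; without maximality the point $y$ can be missing and the inequality genuinely fails, as the one-point exceptional sequence in $\Z^1$ already shows.
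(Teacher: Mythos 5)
Your proof is correct, but it runs along a genuinely different track from the one in the paper. Both arguments lean on Corollary~\ref{cor-oddEven} (the layers of either parity project to a \emph{maximal} exceptional sequence in $\Z^{r-1}$), but they diverge afterwards. The paper never constructs a matching: it takes the sparse index set $C''=\{b\}\cup\big((b+1)+2\Z\big)\setminus\{b\pm1\}$, observes that $1\notin C''-C''$ so that Lemma~\ref{lem-combine} bounds $|\cl_{C''}|$ by $2^{r-1}$, and then obtains the inequality by pure counting, since $\cl_{C''}$ arises from the full odd part (of size exactly $2^{r-1}$) by trading $L_{b-1}\cup L_{b+1}$ for $L_b$. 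You instead invoke Proposition~\ref{prop-cube} in dimension $r-1$ to produce, for each $x\in L_b$, a unique mod-$2$ partner $y$ among the layers of opposite parity, and then apply the exceptionality condition to the single pair $\{x,y\}$: all transverse coordinates of $y-x$ are even, so the witnessing index is forced to be $\nu$, which pins $y$ into $L_{b-1}\cup L_{b+1}$; this step is sound in both orderings of the pair. Your version yields slightly more, namely an explicit injection $L_b\hookrightarrow L_{b-1}\sqcup L_{b+1}$ compatible with the cube labeling, identifying \emph{which} neighbouring points are responsible for the bound. The paper's version is shorter and its mechanism --- bounding the load of any sparse set of layers by $2^{r-1}$ --- is reused almost verbatim for Proposition~\ref{prop-alternatingLayers}, which your pairing argument would not deliver as directly. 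Both proofs use maximality in an essential way, as your closing remark correctly observes.
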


\begin{proof}
We start with the subsets
$$
C:=(b+1)+2\Z
\hspace{1em}\mbox{and}\hspace{1em}
C':= b +2\Z= C+1.
$$
They fit the assumptions of Corollary~\ref{cor-oddEven},
hence $\cl_C:=\cl\cap \pi_\nu^{-1}(C)=\bigcup_{c\in C} L_c$
consists of exactly $2^{r-1}$ elements -- and so does $\cl_{C'}$ what,
nevertheless, we do not need. Instead, we consider
$$
C'':= \{b\} \cup C \setminus\{b\pm 1\},
$$
i.e.~
$
\cl_{C''} = L_{b} \cup \cl_C \setminus (L_{b-1}\cup L_{b+1}).
$
Furthermore, $C''\subset\Z$ meets the requirements of Lemma~\ref{lem-combine},
thus $|\cl_{C''}|\leq 2^{r-1}$.
Hence
$
2^{r-1} \geq |\cl_{C''}|
= |L_{b} \cup \cl_C \setminus (L_{b-1}\cup L_{b+1})|
= \ell_b + |\cl_{C}| - \ell_{b-1} - \ell_{b+1} = \ell_b + 2^{r-1} - \ell_{b-1} - \ell_{b+1}
$
and therefore
$
\ell_b \leq \ell_{b-1} + \ell_{b+1}
$.
\end{proof}

\subsection{Blocks are balanced}\label{blocksBalanced}
In Corollary~\ref{cor-oddEven} we had seen that the even and the odd layers
contribute the same number of points to a \mes~$\cl$,
namely $2^{r-1}$ in each case. However,
a similar statement is true for each block separately.

\begin{proposition}
\label{prop-alternatingLayers}
Let $\cl$ be a \mes. Then,
within each block $L_B$, the alternating sums $\sum_{c\in B}(-1)^c\,\ell_c$
vanish.
\end{proposition}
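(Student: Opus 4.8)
The plan is to deduce the per-block identity from Corollary~\ref{cor-oddEven} by comparing the standard even/odd splitting of the layers of $\cl$ with a second splitting obtained from it by interchanging the two parities \emph{only inside the block in question}. Fix the direction $\nu$ and a segment $B=\{m,m+1,\ldots,M\}$; by definition of a segment, $L_{m-1}=L_{M+1}=\emptyset$. Put $E_B:=\{c\in B: c\text{ even}\}$, $O_B:=\{c\in B: c\text{ odd}\}$, and $L_{E_B}:=\bigcup_{c\in E_B}L_c$, $L_{O_B}:=\bigcup_{c\in O_B}L_c$. Since $(-1)^c$ is $+1$ on $E_B$ and $-1$ on $O_B$, and distinct layers are disjoint, we have $\sum_{c\in B}(-1)^c\ell_c=|L_{E_B}|-|L_{O_B}|$; so the claim reduces to proving $|L_{E_B}|=|L_{O_B}|$.

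First I would write down what the usual splitting $2\Z\sqcup(2\Z+1)$ yields. Let $S_{\mathrm{ev}}$ and $S_{\mathrm{od}}$ denote the unions of the nonempty layers \emph{outside} $B$ lying in even, resp.\ odd, positions. As the even layers of $\cl$ are precisely $S_{\mathrm{ev}}\sqcup L_{E_B}$ (and similarly for the odd ones), Corollary~\ref{cor-oddEven} gives $|S_{\mathrm{ev}}|+|L_{E_B}|=2^{r-1}$ and $|S_{\mathrm{od}}|+|L_{O_B}|=2^{r-1}$. Next I would form the ``swapped'' index sets: let $C$ consist of all even integers outside the interval $[m-1,M+1]$ together with all of $O_B$, and let $C'$ consist of all odd integers outside $[m-1,M+1]$ together with all of $E_B$. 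Dropping the positions $m-1,M+1$ serves only to avoid creating adjacencies at the two ends of $B$; since $L_{m-1}=L_{M+1}=\emptyset$, it does not change the associated unions of layers, which are $S_{\mathrm{ev}}\cup L_{O_B}$ for $C$ and $S_{\mathrm{od}}\cup L_{E_B}$ for $C'$. The crucial step --- the one I expect to be the only genuinely delicate point --- is to check that $C$ and $C'$ are again admissible for Corollary~\ref{cor-oddEven}: they are disjoint, $C\cup C'=\Z\setminus\{m-1,M+1\}$ contains every index carrying a point of $\cl$ (so $\cl\subseteq\bigcup_{c\in C\cup C'}L_c$), and no two elements of $C$ (resp.\ $C'$) are adjacent --- for an adjacent pair with both members outside $B$ would be two even (resp.\ odd) integers, an adjacent pair with both members in $B$ would be two odd (resp.\ even) integers, and the only adjacent pairs straddling the boundary of $B$ are $(m-1,m)$ and $(M,M+1)$, whose outer members have been removed.

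Granting this, Corollary~\ref{cor-oddEven} applied to $C,C'$ yields $|S_{\mathrm{ev}}|+|L_{O_B}|=2^{r-1}$ and $|S_{\mathrm{od}}|+|L_{E_B}|=2^{r-1}$. Comparing the first of these with $|S_{\mathrm{ev}}|+|L_{E_B}|=2^{r-1}$ from the standard splitting gives $|L_{E_B}|=|L_{O_B}|$, hence $\sum_{c\in B}(-1)^c\ell_c=0$. Apart from the adjacency bookkeeping, the argument is just a two-by-two comparison of the four quantities $|S_{\mathrm{ev}}|,|S_{\mathrm{od}}|,|L_{E_B}|,|L_{O_B}|$; note that in the special case $\width(L_B)=2$ it recovers the equality $\ell_{\min(B)}=\ell_{\max(B)}$ remarked after Proposition~\ref{prop-outerInner}.
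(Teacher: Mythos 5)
Your argument is correct and is essentially identical to the paper's proof: both apply Corollary~\ref{cor-oddEven} once to the standard splitting $2\Z\sqcup(2\Z+1)$ and once to the splitting obtained by swapping parities inside $B$ while deleting the empty boundary positions $\min(B)-1$ and $\max(B)+1$ to avoid adjacencies, then compare the resulting counts of $2^{r-1}$. Your sets $C,C'$ coincide with the paper's $D,D'$, and your adjacency bookkeeping matches the paper's justification.
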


Note that this proposition
excludes once again, like it does Proposition~\ref{prop-outerInner},
the existence of blocks of width one.

\begin{proof}
We do again exploit Corollary~\ref{cor-oddEven}.
We start with the standard pair $C:=2\Z$ and $C':=2\Z+1$.
We then produce $D,D'\subset\Z$ out of $C,C'\subset\Z$ by
switching $B\cap C$ with $B\cap C'$ inside these sets.
More precisely, we set
$$
D:= C \cup (B\cap C') \setminus (B\cap C) \setminus\{\min(B)-1,\,\max(B)+1\}
$$
and
$$
D':= C' \cup (B\cap C) \setminus (B\cap C') \setminus\{\min(B)-1,\,\max(B)+1\}.
$$
These sets are still disjoint, and the removal of the empty layers beyond $B$
ensures the assumptions $1\notin D-D$ and $1\notin D'-D'$ without
destroying the property $\cl\subseteq \bigcup_{c\in D\cup D'} L_c$.
Thus, both $\cl_C$ and $\cl_D$ (and also $\cl_{C'}$ and $\cl_{D'}$)
contain $2^{r-1}$ elements each.
But this implies that
$\sum_{c\in B\cap C}\ell_c=\sum_{c\in B\cap C'}\ell_c$.
\end{proof}

\subsection{Empty layers}\label{emptyLayers}
The presence of inner empty layers, i.e.\ those separating blocks in $\cl$,
allows to manipulate $\cl$ in different ways
with various results. To demonstrate this, we assume
that \kbox{$L_0=\emptyset$}.
As usual, this is meant for a fixed direction $\nu\in\{1,\ldots,r\}$.
Recall from Definition~\ref{def-contamination} that $e_\nu\in\Z^r$
denotes the corresponding unit vector, i.e.,
$\pi_\nu(e_\nu)=1$ and $q_\nu(e_\nu)=0\in\Z^{r-1}$.

\subsubsection{Merging positive and negative layers}\label{mergeLayers}
Fix a natural number $m\in\N$. Then, we obtain a new sequence
$\cl(m)$ out of $\cl$ by defining
$$
\cl(m)^i:= \left\{\begin{array}{ll}\cl^i & \mbox{if }\cl_\nu^i > 0
\\[0.3ex]
\cl^i+m\cdot e_\nu & \mbox{if }\cl_\nu^i < 0.
\end{array}\right.
$$
While $\cl(0)=\cl$, the new sequence $\cl(1)$ arises from $\cl$ by
simply erasing the
$0$-th (empty) layer. For larger $m\geq 2$, it will more and more happen that
the former negative part $\cl^-$ and the former positive part $\cl^+$
merge. In particular, a potential block structure of $\cl$ will disappear --
or reappear, in a different manner, for $m\gg 0$.
\\[1ex]
While it is clear that $\cl(m)$ stays a \mes\ if $\cl$ was one, it should be
mentioned that it is {\em not} guaranteed at all (and almost always false)
that exceptionality survives when including an empty layer in an
exceptional sequence.
Similarly, a potential fullness of $\cl$ is bequeathed to $\cl(m)$ -- but
the reverse conclusion does not work either.

\subsubsection{Duplicating or reducing empty layers}
\label{duplicateEmptyLayers}
A much easier situation arises when the definition of $\cl(m)$ is
literally extended to negative $m<0$. This means to amend the empty layer
$L_0$ by additional $|m|$ empty ones. The reverse operation means to
thin out sequences of consecutive empty layers such that at least
one of them survives. This operation is a special type of those
from Subsection~(\ref{mergeLayers}).

\begin{proposition}
\label{prop-DupRedLayers}
Duplicating or reducing empty layers
in a sequence $\cl$ (without extinguishing them all) leads to a sequence
$\cl'$ sharing with $\cl$ exactly the same properties with
regard to exceptionality, maximality, and infectivity, i.e.\ fullness.
\end{proposition}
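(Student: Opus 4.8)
The plan is to reduce the assertion to a single \emph{elementary} operation --- inserting one empty layer, or deleting one empty layer from a run of width at least two --- and to verify the three properties for such a move. Every duplication or reduction of a block of consecutive empty layers (without emptying it out completely) is a finite composition of such moves, and the two elementary moves are mutually inverse. A deletion move is literally the passage $\cl\mapsto\cl(1)$ of Subsection~(\ref{mergeLayers}), while an insertion move is its ``negative $m$'' analogue flagged in Subsection~(\ref{duplicateEmptyLayers}); so it suffices to treat one elementary move, using that its inverse is again of this kind.

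\emph{Exceptionality and maximality.} Fix the direction $\nu$ along which we operate and split $\cl=\cl^-\disjcup\cl^+$ according to the sign of the $\nu$-th coordinate relative to the empty layer being modified; the move fixes one of the parts and translates the other by $\pm e_\nu$, hence changes only $\nu$-th coordinates. For $i<j$ with $\cl^i,\cl^j$ in the same part, the whole difference vector $\cl^j-\cl^i$ is unchanged, so its exceptionality index survives. For $i<j$ with $\cl^i$ and $\cl^j$ in different parts, their $\nu$-th coordinates lie on opposite sides of an empty layer, hence $|\cl^j_\nu-\cl^i_\nu|\geq 2$; therefore the exceptionality index of this pair is some $\mu\neq\nu$, and the $\mu$-th coordinate of $\cl^j-\cl^i$ is untouched by the move. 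Thus $\cl'$ is again exceptional, and since the inverse move has the same shape, exceptionality of $\cl'$ and of $\cl$ are equivalent. Maximality is immediate, since the move is a bijection on the underlying point set, so $|\cl'|=|\cl|$.

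\emph{Fullness.} As the two elementary moves are mutually inverse, it is enough to prove that each preserves fullness in the forward direction; applying this once in each direction gives the equivalence. For the deletion move this is exactly the bequeathal of fullness to $\cl(1)$ recorded in Subsection~(\ref{mergeLayers}), and for the insertion move it is the (easier) negative-$m$ version of the same statement, as observed in Subsection~(\ref{duplicateEmptyLayers}). For a self-contained argument one compares the two contamination processes directly, using the criterion that a sequence is full precisely when $\cont^\infty$ ever contains a lattice unit cube $\{0,1\}^r+v$ --- such a cube first infects the $2^{r-1}$ parallel lines spanning it in one direction, then the parallel $2$-planes, and so on, reaching all of $\Z^r$. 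With $\iota$ the piecewise translation identifying $\cl$ with $\cl'$, one checks that the two processes run in lockstep, $\iota$-equivariantly, up to and including the first step at which a full line in direction $\nu$ is infected: an empty layer can only be entered along a direction-$\nu$ line, and two infected points separated by an empty layer are never $\nu$-adjacent, so the shift performed on $\cl^+$ stays invisible until then.

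The main obstacle is the endgame of this self-contained route. Once a full direction-$\nu$ line has appeared the two processes genuinely diverge, and one must show that from that moment the eventual appearance of a fully infected unit cube no longer depends on the width of the modified run of empty layers --- morally because the newly created line $\{\bfy_0\}\times\Z$ already bridges all layers, so the upper and lower blocks interact irrespective of their separation. Making this precise is where the real work lies; everything before it is bookkeeping, and the quickest route simply invokes the forward preservation of fullness already provided by Subsections~(\ref{mergeLayers}) and~(\ref{duplicateEmptyLayers}).
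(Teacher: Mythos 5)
Your treatment of exceptionality and maximality is correct and matches the paper's (one\-/line) argument: same\-/part pairs keep their difference vector, cross\-/part pairs are separated by at least $2$ in the $\nu$-th coordinate both before and after the move, so their witness index is some $\mu\neq\nu$ and is untouched. The problem is the fullness part. Your ``quickest route'' is circular: Subsection~(\ref{duplicateEmptyLayers}) is precisely where Proposition~\ref{prop-DupRedLayers} lives, so it cannot be invoked to prove itself, and the remark in Subsection~(\ref{mergeLayers}) that fullness is bequeathed to $\cl(m)$ is an unproved aside (about a different, more destructive operation), not a citable lemma. That leaves your self\-/contained route, and there you explicitly stop at the ``endgame'': you prove lockstep only up to the first infected $\nu$-line and then assert that the processes ``genuinely diverge,'' leaving the remaining comparison as ``where the real work lies.'' That is the actual content of the proposition, so as written the proof has a genuine gap.

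The missing idea is the second sentence of the paper's proof: inside the band of empty layers, a point can only ever become infected as part of a complete $\nu$-line $p\times\Z\times q$, and any subsequent infection in a direction $\mu\neq\nu$ triggered by two adjacent such lines happens simultaneously at \emph{every} height of the band (and indeed of all of $\Z$). Consequently the correct invariant survives \emph{all} steps, not just the steps before the first full $\nu$-line appears: at every stage $k$, the infected set restricted to the non\-/empty layers is carried to its counterpart by your piecewise translation $\iota$, while the infected set inside the band is $T_k\times(\mbox{band})$ for the \emph{same} subset $T_k\subset\Z^{r-1}$ in both configurations. The only way the band width could matter is if two infected points on opposite sides of it were $\nu$-adjacent, which is excluded as long as at least one empty layer remains. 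So there is no divergent endgame; once you add this observation, your lockstep argument closes by induction on $k$ and the unit\-/cube criterion (which is correct) is not even needed.
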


\begin{proof}
The property of $(\cl^j-\cl^i)_\mu$ being equal or not equal to $1$
is equivalent to the same property of $(\cl'^{j}-\cl'^{i})_\mu$
for every $\mu=1,\ldots,r$. Moreover, within the contamination procedure,
all new points $(p,c,q)\in\Z^{\nu-1}\times\Z\times\Z^{r-\nu}$ arising in the empty layers
$L_c$ appear simultaneously in all layers, i.e.\ as a line $p\times\Z\times q$.
\end{proof}

The previous observation means that, while empty layers do
indeed matter, we can
restrict ourselves to the case of isolated (but, maybe, still several) ones.
This applies either for the classification of \mes s or
for proving that maximality implies fullness.

\subsection{Bounding the sequence}\label{packSeq}
The previous results have the consequence that the problem of fullness of all \mes\ can
be
reduced to \mes\ in a bounded region.

\begin{proposition}
\label{prop-bounding}
All $r$-dimensional \mes\ are full if and only all those contained in a cube with
$3 \cdot 2^{r-1} - 1$ layers in each direction are full.
\end{proposition}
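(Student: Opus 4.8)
The ``only if'' direction is trivial, so the plan is to establish the ``if'' direction: assuming every \mes\ contained in a cube with $3\cdot 2^{r-1}-1$ layers in each direction is full, I would deduce that an arbitrary \mes\ $\cl\subset\Z^r$ is full. The idea is to pass from $\cl$ to a \mes\ $\cl'$ with the same fullness status that is small enough to be covered by the hypothesis. The key device for this is Proposition~\ref{prop-DupRedLayers}, the only tool that shrinks a \mes\ while preserving fullness, so the whole argument is organized around it.

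First I would normalize the empty layers. Fixing a direction $\nu$ and invoking Proposition~\ref{prop-DupRedLayers}, I would collapse every maximal run of consecutive empty $\nu$-layers of $\cl$ to a single one; this affects neither maximality nor fullness. The point to verify -- and it is routine -- is that this operation merely translates along $e_\nu$ the points lying on one side of such a run, hence leaves every coordinate $\mu\neq\nu$ unchanged and therefore does not disturb the layer structure in any other direction. Carrying this out successively for $\nu=1,\dots,r$ then produces a \mes\ $\cl'$, full precisely when $\cl$ is, all of whose empty layers are isolated in every direction at once.

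Next I would bound $\width_\nu(\cl')$ for each $\nu$. I would arrange the non-empty $\nu$-layers of $\cl'$ into blocks $L_{B_1},\dots,L_{B_k}$ separated by the (now isolated) empty layers. By Proposition~\ref{prop-outerInner} each block has width $w_i:=\width_\nu(L_{B_i})\geq 2$, and since every layer inside a block is non-empty it contains at least $w_i$ of the $2^r$ points of $\cl'$; hence $\sum_{i} w_i\leq 2^r$ and, as each $w_i\geq 2$, also $k\leq 2^{r-1}$. Because consecutive blocks are now separated by exactly one empty layer, $\width_\nu(\cl')=\sum_{i} w_i+(k-1)\leq 2^r+2^{r-1}-1=3\cdot 2^{r-1}-1$. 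This holding in every direction, a translation in $\Z^r$ (which by Subsection~(\ref{symmetries}) preserves exceptionality, maximality and fullness) moves $\cl'$ into a cube with $3\cdot 2^{r-1}-1$ layers in each direction, whence $\cl'$, and therefore $\cl$, is full by assumption.

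I do not anticipate a real obstacle; the proof is bookkeeping resting on the structural results already proved. The one spot requiring attention is checking that the empty-layer reductions performed in the various directions are mutually compatible -- i.e.\ that after treating all $r$ directions the emptiness pattern is simultaneously isolated in each -- which holds because each such reduction is a translation along a single coordinate axis and hence invisible to the layer decomposition with respect to the remaining coordinates. A secondary, purely cosmetic point is that $\width_\nu$ disregards leading and trailing empty layers, so only the $k-1$ internal gaps enter the count.
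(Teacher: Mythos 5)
Your proof is correct and follows essentially the same route as the paper's: reduce runs of empty layers via Proposition~\ref{prop-DupRedLayers}, then use Proposition~\ref{prop-outerInner} to bound the number of blocks and hence the width by $2^r+2^{r-1}-1$ in each direction. Your extra check that the reductions in different directions are mutually compatible (being translations along single coordinate axes) is a point the paper leaves implicit, but it is not a divergence in method.
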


\begin{proof}
By Proposition~\ref{prop-outerInner}, all blocks of a \mes\ $\cl$ have at least width~2
in any direction.
By Proposition~\ref{prop-DupRedLayers}, we can exchange $\cl$ by $\cl'$ that has no empty
blocks of width larger than 1 and still the same properties with regard to fullness.
Hence $\cl'$ has at most $2^r$ non-empty layers and at most $2^{r-1} - 1$ empty layers in
between in all $\nu$-directions.
Thus $\cl'$ fits in a cube with $2^r+2^{r-1}-1$ layers in each direction.
\end{proof}

\section{Special issues for $r=3$} \label{compRThree}

In this concluding section, we assume that $\cl$ is a
3-dimensional \mes, i.e.~with $r=3$. Moreover,
by Proposition~\ref{prop-thinthree},
we may and will assume that $\cl$ has no layer of size $4$.
Finally, according to
Subsection~(\ref{duplicateEmptyLayers}),
we suppose that all inner empty layers, in all directions, are isolated.
\\[1ex]
We are going to present upper bounds
for the widths of $\cl$ with respect to all three directions. Let $\nu\in\{1,2,3\}$
-- we look for all possible ``load sequences''
$(\ldots,\ell_{c-1},\ell_c,\ell_{c+1},\ldots)$
indicating the sizes of the consecutive layers, up to equivalence relations
like shifts or reflections within each block.

\subsection{Assuming a layer of size~3}
\label{estimateWidthThree}
For the fixed direction $\nu$ we suppose that we have a layer of size~3
within the \mes~$\cl$. We are going to distinguish a few cases
-- but the overall result will be that $\width_\nu(\cl)\leq 6$.
Moreover, if lacking empty layers, we can bound it by $5$.

\subsubsection{Two layers of size 3}\label{twoLayersThree}
If $\cl$ contains, in the same direction $\nu$, two layers of size~3, then, by
Proposition~\ref{prop-largeStick}, they have to be adjacent.
The remaining two layers of width 1 cannot be isolated by
Proposition~\ref{prop-outerInner}. Thus, we obtain as possible
load sequences $(1,3,3,1)$ or $(3,3,1,1)$, or $(3,3,0,1,1)$.
In any case, we have $\width_\nu(\cl)\leq 5$.

\subsubsection{Two layers of size 2}
\label{threeTwoLayersTwo}
They are supposed to exist additionally to the layer of size 3 we have anyway.
Again by Proposition~\ref{prop-largeStick}, we see that the
load sequence has to look like $(\ldots,2,3,2,\ldots)$, thus, it has to be
$(2,3,2,1)$ yielding $\width_\nu(\cl)=4$.

\subsubsection{Exactly one layer of size 2}
\label{twoThreeLayer}
Here we obtain $(1,3,2)$ as a necessary subsequence. The remaining part
has to be $(1,1)$ -- either separated from the big part by an empty layer or
not. We obtain $\width_\nu(\cl)=5$ or $6$.

\subsubsection{All layers are of size 1}
\label{threeRestOne}
This is meant up to the big layer of size 3 we have assumed anyway.
Then, this case cannot occur --
it contradicts Lemma~\ref{lem-arithMean}.

\subsection{No layer of size~3}
\label{estimateWidthNotThree}
By Proposition~\ref{prop-bounding}, the longest possible load sequence is
$$
(1,1,0,1,1,0,1,1,0,1,1);
$$
it has length~$11$. And this can indeed occur in a \mes.
On the other hand, if there is at least one layer of size~2
involved, then one of the subsequences $(1,1,0,1,1)$ has to be replaced
by $(1,2,1)$ or even $(2,2)$. Thus, the estimate for $\width_\nu(\cl)$
drops from $11$ to $9$.

\subsection{Computational evidence}
\label{compEv}
We will conclude our proof of Theorem~\ref{th-bbbCont} (or, equivalently,
Theorem~\ref{th-bbb}) by strengthening the claim of Lemma~\ref{layersWithThree}.
The starting point was that
the seven elements of $\{\cl^1,\cl^2,\ldots,\cl^7\}$ are distributed
on three affine planes having distance one to $\cl^0$. While the
original claim of the lemma states
that there is one plane containing three points of
$\cl\setminus\{\cl^0\}$, we do even know that the distribution must be either
$(3,3,3)$, $(3,3,2)$, $(3,3,1)$ or $(3,2,2)$ -- recall that there are no planes containing 4 points
of $\cl$.
\\[1ex]
Assume first that the distribution is $(3,3,3)$, $(3,3,2)$ or $(3,3,1)$. This means that, w.l.o.g.\
for $\nu=1,2$, we may apply the scenario of
Subsection~(\ref{estimateWidthThree}) leading to $\width_{1/2}(\cl)\leq 6$
and, for $\nu=3$, the first one from Subsection~(\ref{estimateWidthNotThree})
leading to $\width_3(\cl)\leq 11$.
\\[1ex]
Similarly, the distribution $(3,2,2)$ leads to, w.l.o.g.,
$\width_{1}(\cl)\leq 6$ and $\width_{2/3}(\cl)\leq 9$.
\\[1ex]
Altogether this means that $\cl$ fits either in a
$(6\times 6\times 11)$- or in a
$(6\times 9\times 9)$-grid.
Both cases had been checked with a computer, using a rather straight
algorithm. The result was that all possible \mes s had contaminated the whole
space $\Z^3$ in at most seven steps.

\bibliographystyle{alpha}
\bibliography{fes3d}

\end{document}